\newtheorem{theorem}{\bf Theorem}[section]
\newtheorem{corollary}[theorem]{\bf Corollary}
\newtheorem{lemma}[theorem]{\bf Lemma}
\newtheorem{thm}[theorem]{\bf Theorem}
\newtheorem{cor}[theorem]{\bf Corollary}
\theoremstyle{definition}
\newtheorem{definition}[theorem]{\bf Definition}
\newtheorem{remark}[theorem]{\bf Remark}
\newtheorem{example}[theorem]{\bf Example}
\newtheorem{question}[theorem]{\bf Question}
\newcommand{\ba}{\begin{array}}
\newcommand{\ea}{\end{array}}
\def \R{{\mathbb R}}
\def \C{{\mathbb C}}
\def \K{{\mathbb K}}
\def \I{{\mathcal I}}
\def \x{{\mathbf{x}}}
\def \w{{\mathbf{w}}}
\def \e{{\mathbf{e}}}
\def \P{{\mathbb P}}
\def \Delta{\triangle}
\def \x{{\mathbf{x}}}
\def \y{{\mathbf{y}}}
\def \z{{\mathbf{z}}}
\def \w{{\mathbf{w}}}
\def \aaa{{\mathbf{a}}}
\DeclareMathOperator{\inter}{int} 
\DeclareMathOperator{\GL}{GL} 
\DeclareMathOperator{\init}{in}
\DeclareMathOperator{\Res}{Res} 
\DeclareMathOperator{\Disc}{Disc}
\DeclareMathOperator{\vol}{vol}
\DeclareMathOperator{\New}{New}
\newcommand{\compl}{\mathsf{c}}
\title[Imaginary projections: Complex versus real coefficients]
{Imaginary projections: \\ Complex versus real coefficients}
\author{Stephan Gardoll}
\author{Mahsa Sayyary Namin}
\author{Thorsten Theobald}
\address{Goethe-Universit\"at, FB 12 -- Institut f\"ur Mathematik,
  Postfach 11 19 32, 60054 Frankfurt am Main, Germany}
\email{\{gardoll,sayyary,theobald\}@math.uni-frankfurt.de}
\thanks{This work was supported through DFG grant TH 1333/7-1.} 
\begin{document}
	
\begin{abstract}
	Given a multivariate complex polynomial ${p\in\mathbb{C}[z_1,\ldots,z_n]}$, the imaginary projection $\mathcal{I}(p)$ of $p$ is defined as the projection of the variety $\mathcal{V}(p)$ onto its imaginary part. We focus on studying the imaginary projection of complex polynomials and we state explicit results for certain families of them with arbitrarily large degree or dimension. Then, we restrict to complex conic sections and give a full characterization of their imaginary projections, which generalizes a classification for the case of real conics. That is, given a bivariate complex polynomial $p\in\C[z_1,z_2]$ of total degree two, we describe the number and the boundedness of the components in the complement of $\mathcal{I}(p)$ as well as their boundary curves and the spectrahedral structure of the components. We further show a realizability result for strictly convex complement components which is in sharp contrast to the case of real polynomials.
\end{abstract}

\maketitle
\section{Introduction\label{se:intro}}

Given a polynomial $p\in\C[\z]:=\C[z_1, \ldots ,z_n]$, the imaginary projection $\I(p)$ as introduced in \cite{jtw-2019} is the projection of the variety $\mathcal{V}(p)\subseteq\C^n$ onto its imaginary part, that is,
\begin{equation}
\label{eq:imagproj1}
\ \I(p)= \ \left\{ \z_{\rm im} = ((z_1)_{\rm im}, \ldots, (z_n)_{\rm im}) \ : \ \mathbf{z} \in \mathcal{V}(p)\right\} \ \subseteq \ \R^n,
\end{equation}
where $(\cdot)_{\rm im}$ is the imaginary part of a complex number.
Recently, there has been wide-spread research interest in mathematical branches which are directly connected to the imaginary projection of polynomials. 

As a primary motivation, the imaginary projection provides a 
comprehensive geometric view for notions of \emph{stability of 
	polynomials} and generalizations thereof. A polynomial $p\in\C[\z]$ is called \textit{stable}, if $p(\z)=0$ implies $(z_j)_{\rm im}\leq 0$ for some $j\in[n]$.
In terms of the imaginary projection $\I(p)$, we can express the stability of $p$ as
the condition $\I(p)\cap\R^n_{>0}=\emptyset$. Stable polynomials
have applications in many branches of mathematics
including combinatorics (\cite{braenden-hpp} and see \cite{brown-wagner-2020}
for the connection of the imaginary projection to combinatorics), 
differential equations \cite{borcea-braenden-2010},
optimization \cite{straszak-vishnoi-2017},
probability theory \cite{bbl-2009}, and
applied algebraic geometry \cite{volcic-2019}.
Further application areas include
theoretical computer science \cite{mss-interlacing1, mss-interlacing2},
statistical physics \cite{borcea-braenden-leeyang1}, and control theory \cite{ms-2000}, see also the surveys \cite{pemantle-2012} and
\cite{wagner-2010}. 

Recently, various generalizations and variations of the stability
notion have been studied, such as stability with respect to
a polyball \cite{gkv-2016,gkv-2017},
conic stability \cite{dgt-conic-pos-map-2019,joergens-theobald-conic},
Lorentzian polynomials \cite{braenden-huh-2020}, or positively
hyperbolic varieties \cite{rvy-2021}. Exemplarily, regarding the 
conic stability,
a polynomial $p\in\C[\z]$ is called \textit{$K$-stable} for a proper cone $K\subset \R^n$ if $p(\z)\neq 0$, whenever $\z_{\rm im}\in \inter K$,
where $\inter$ is the interior. In terms of
the imaginary projection, this condition can be equivalently expressed as 
$\I(p)\cap \inter K=\emptyset$. 

Another motivation comes from the close connection of the imaginary
projection to 
hyperbolic polynomials and hyperbolicity 
cones \cite{garding-59}.
As shown in \cite{joergens-theobald-hyperbolicity}, in case of a 
real \emph{homogeneous} polynomial $p$, 
the components of the complement 
$\I(p)^\compl$ coincide with the hyperbolicity cones
of $p$. 
These concepts play a central role in hyperbolic programming,
see \cite{gueler-97,naldi-plaumann-2018,
	nesterov-tuncel-2016,saunderson-2019}.
A prominent open question in this research direction is the generalized
Lax conjecture, which claims that every hyperbolicity cone is 
spectrahedral, see \cite{vinnikov-2012}.
Representing
convex sets by spectrahedra is not only motivated by the general 
Lax conjecture, but also by the question of effective handling convex
semialgebraic sets (see, for example, \cite{bpt-2013,kpv-2015}).
Recently, the conjecture that every convex semialgebraic set would be 
the linear projection of a spectrahedron, the 
``Helton-Nie conjecture'', has 
been disproven by Scheiderer \cite{scheiderer-spectrahedral-shadows}.

Moreover, the imaginary projection closely relates to and complements 
the notions of \emph{amoebas}, as introduced by Gel'fand, Kapranov 
and Zelevinsky \cite{gkz-1994}, and \emph{coamoebas}. 
The amoeba $\mathcal{A}(p)$
of a polynomial $p$
is defined as 
{\small	$\,{\!\mathcal{A}(p)\! := \!\{(\ln|z_1|, \ldots,\ln|z_n|) \!:\! 
		\mathbf{z} \in\! \mathcal{V}(p) \cap (\C^*)^n \}}$},
so it considers the logarithm of the absolute value of a complex
number rather than its imaginary part. The coamoeba of a polynomial
deals with the phase of a complex number. Each of these three viewpoints
of a complex variety gives a set in a real space with the characteristic
property that the complement of the closure consists of finitely
many \emph{convex} connected components. See 
\cite{forsgard-johansson-2015}, \cite{gkz-1994} 
and  \cite{jtw-2019}
for the convexity properties of amoebas, coamoebas, 
and imaginary projections, respectively. Due to their convexity
phenomenon, these structures provide natural classes in recent
developments of convex algebraic geometry. 

For amoebas, an exact upper bound on the number of components in 
the complement is known \cite{gkz-1994}.
For the coamoeba of a polynomial $p$, it has been conjectured that
there are at most $n!\vol\New(p)$ connected components in the complement,
where $\vol$ denotes the volume and $\New(p)$ the Newton polytope
of $p$, see \cite{forsgard-johansson-2015}
for more background as well as a proof for the special case
$n=2$.
For imaginary projections, a tight upper bound is known in the homogeneous
case \cite{joergens-theobald-hyperbolicity}, but for the non-homogeneous
case there only exists a lower bound \cite{jtw-2019}.

Currently, no efficient method is known to calculate the imaginary 
projection for a general real or complex polynomial. For
some families of polynomials, the imaginary projection has been
explicitly characterized,
including complex linear polynomials and real quadratic polynomials,
see \cite{jtw-2019} and \cite[Proposition 3.2]{joergens-theobald-conic}. 
However, since imaginary projections for non-linear
complex polynomials exhibit new structural phenomena compared
to the real case,
even the characterization of the imaginary projection of complex
conics had remained elusive so far.

Our primary goal is to reveal fundamental and surprising differences between imaginary projections of \emph{real polynomials} and \emph{complex polynomials}.
In fixed degree and dimension, for a polynomial $p$ with non-real coefficients, the algebraic degree of the {\it boundary} of the imaginary projection  $\partial\I(p):= \overline{\I(p)}\cap\overline{\I(p)^\compl}$
 can be higher than the case of real coefficients. Here $(.)^\compl$ and $\overline{(.)}$ are the complement and Euclidean closure, respectively.
These incidences already begin when the degree and dimension are both two. However, the contrast is not only concerning the boundary degrees, but also the arrangements and the strict convexity of the components in $\I(p)^\compl$.

We start with structural results which serve to work out the differences 
between the case of real and complex coefficients. Our first result is a sufficient
criterion on the roots of the \emph{initial form} 
of an arbitrarily large degree non-real 
bivariate complex polynomial to have the real plane as its imaginary projection,
see Theorem~\ref{th:EvenDeg} and Corollary~\ref{co:wholeplane2}.

Next, we characterize the imaginary projections of $n$-dimensional
multivariate complex quadratics with hyperbolic initial form,
see Theorem~\ref{th:ndimhyperbol1} and Corollary~\ref{co:ndimhyperbol2}. 

In the two-dimensional case,
although by generalizing from real to complex conics, the bounds on the number of bounded and unbounded components in the complement of the imaginary projections remain unchanged,
the
possible arrangements of these components, strictness of their convexity, and the algebraic degrees of their boundaries strongly differ. See Corollaries \ref{co:alg-degrees} and
\ref{cor:oneUnbdd}.
For conic sections with real coefficients, it was shown 
by J\"orgens, Theobald, and de~Wolff \cite{jtw-2019} that the boundary   
$\partial\I(p)$ consists of pieces which are algebraic
curves of degree at most two. In sharp contrast to this,
for complex polynomials, the boundary may not be algebraic and the degree of its irreducible pieces can go up to 8.
For example, despite the simple expression of the polynomial $p = z_1^2+{\rm i}z_2^2+z_2$, an exact description 
of 
$\I(p)$ is
\begin{equation}
\label{eq:example1}
\begin{array}{r@{\hspace*{0.5ex}}l}
\mathcal{I}(p) \ = \ & \{ y \in \R^2 \, : \,
-64 y_1^8-128 y_1^4 y_2^4-64 y_2^8
+256 y_1^4 y_2^3+256 y_2^7-272 y_1^4 y_2^2
\\ [1ex]
& \, -400 y_2^6+144 y_1^4 y_2+304 y_2^5-27 y_1^4-112 y_2^4+16 y_2^3 \le 0\}
\setminus \{(0,1/2)\},
\end{array}
\end{equation}
and the describing polynomial in~\eqref{eq:example1} is irreducible over $\C$.
In this example, the set ${\I(p)}^{\compl}$ consists of a single convex connected and bounded component.
Any polynomial vanishing on the boundary will also vanish on the single
point $(0,1/2)$ which is not part of the boundary $\partial \I(p)$.
Thus, $\partial \I(p)$ is not algebraic.
See Figure~\ref{fi:example1} for an illustration and we return to this
example in Section \ref{se:higherdegree} and at the end of Section~\ref{se:non-hyperbolic}.

\begin{figure}[ht]
	\begin{subfigure}{.40\textwidth}
		\centering
		\includegraphics[width=1.6in]{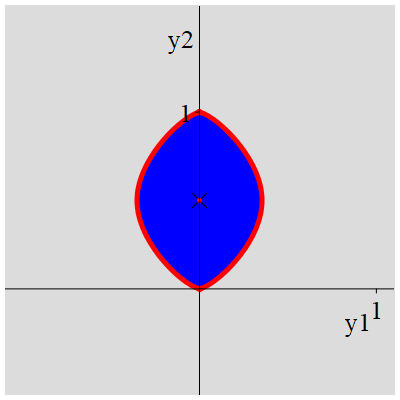}
		\caption{}
	\end{subfigure}%
	\begin{subfigure}{.40\textwidth}
		\centering
		\includegraphics[width=1.6in]{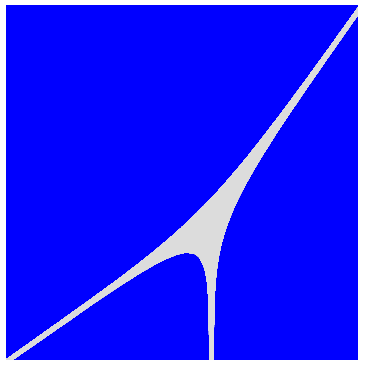}
		\caption{}
	\end{subfigure}%
	\caption{{\small(A)} The gray area and its boundary form the imaginary projection $\I(p)$ of ${p = z_1^2+{\rm i}z_2^2+z_2}$. The polynomial in \eqref{eq:example1} vanishes on the red curve, which consists of a single point and another bounded component. The complement $\I(p)^\compl$ contains the single point and it is
		bounded by the other component. {\small(B)} The amoeba of $p$ is shown in gray.}
	\label{fi:example1}
\end{figure}
Since the topology of the imaginary projection in $\R^n$ is invariant under the action of $G_n:=\C^n\rtimes \GL_n(\R)$, that is the semi-direct product of $\GL_n(\R)$ and complex translations,
the problem to understand the imaginary projections
naturally leads to a polynomial classification problem. 

\medskip

As starting point, recall that 
under the action of the affine group
$\text{Aff}(\C^2)$, there are precisely five orbits for complex conics, with the following representatives:			
\[
\begin{matrix}
z_1^2 \text{ (one line)},&&&
z_1^2+1 \text{ (two parallel lines)},&&&
z_1^2-z_2 \text{ (parabola)},
\end{matrix}
\]\vspace{-6mm}\[
\begin{matrix}
z_1^2+z_2^2 \text{ (two crossing lines)},&&&
z_1^2+z_2^2-1 \text{ (circle)}.
\end{matrix}
\]

However, the arrangement of the components in $\I(p)^\compl$
is not invariant under the action of $\text{Aff}(\C^2)$, but only under its restriction to $G_2$. There are several other related classifications of complex conic sections. Newstead \cite{Newstead} has classified the set of projective complex conics under real linear transformations. However, out of a projective setting his method becomes ineffective as it is based on the arrangements of four intersection points between a conic and its conjugate. 
On the other hand, by considering the real part and the
imaginary part of a complex conic $p$, under
the action of $G_2$ the classification of conic sections has some relations
to the problem of
classifying pairs of real conics. 
Systematic
classifications of this kind
are mostly done in the projective setting and are well understood. See 
\cite{briand-2007,levy-1964,petitjean-2010,uhlig-1976}. However, 
those classifications rely
on the invariance of the number and multiplicity of real intersection points between the two real conics. 
The drawback here is that under complex translations on $p$, these numbers are not invariant anymore, except at infinity.

To capture the invariance under $G_2$, we
develop a novel classification based on the initial forms of complex conics. This classification is adapted to the imaginary projection 
and it is rather fine but coarse enough to allow handling the inherent
algebraic degree of 8 in the boundary description of the imaginary projection.

Finally, we show that non-real complex conics can significantly improve a realization result on the complement of the imaginary projections. In \cite{joergens-theobald-hyperbolicity}, for any given integer $k\ge 1$, they present a polynomial $p$ of degree $d=4\lceil \frac{k}{4}\rceil+2$ as a product of real conics, such that $\I(p)^\compl$ has at least $k$ components that are strictly convex and bounded.  
Using non-real conics, we furnish a degree $d/2+1$ polynomial having exactly $k$ components with these properties. See Theorem~\ref{th:StrictlyConvexComplex} and Question \ref{ques:deg}.

The paper is structured as follows. Section \ref{se:prelim} provides our notation and the necessary background on the imaginary projection of polynomials and contains
the classification of the imaginary projection for the case of real conics. Section~\ref{se:higherdegree} 
deals with complex plane curves and provides a highlighting example where the complex versus real coefficients make a remarkable difference in the complexity of the imaginary projection. 
Moreover, we determine a family of arbitrarily large degree non-real plane curves with a full-space imaginary projection, based on the arrangements of roots of the initial form. 
In Section~\ref{se:QuadraticsWithHyperbolicInit}, we set the degree to be two and let the dimension grow and we classify the imaginary projections of complex quadratics with hyperbolic initial form.
In Sections~\ref{se:mainclassification} 
and~\ref{se:non-hyperbolic}, we restrict the degree and dimension both to be two and we provide a full classification of the imaginary projections for affine complex conics based on their initial forms. Moreover, we determine in which classes the components in the complement of the imaginary projection have a spectrahedral description and also state them explicitly.

Section~\ref{se:mainclassification} 
contains our main classification theorems and the corollaries differentiating the cases of complex and real coefficients. The part where the initial form is hyperbolic is already covered in \ref{se:QuadraticsWithHyperbolicInit}.  Each subsection of  Section~\ref{se:non-hyperbolic} treats one of the remaining classes and explains their spectrahedral structure. In particular, we show that the only class where the components in the complement are not necessarily spectrahedral is the case where the initial form has two distinct non-real roots in $\P^1_\C$ such that they do not form a complex conjugate pair. In Section~\ref{se:convex}, we
prove a realization result for strictly convex complement components, which highlights another contrast between the imaginary projections of complex and real polynomials.
Section~\ref{se:outlook} gives some open questions.

	
	\section{Preliminaries and background}\label{se:prelim}
	
    For a set $S\subseteq\R^n$, we denote by 
	$\overline{S}$ the topological closure of $S$ with respect to the Euclidean topology on $\R^n$ and by $S^{\compl}$
	the complement of $S$ in $\R^n$. The \textit{algebraic degree} of $S$ is the degree of its closure with respect to the 
Zariski topology. The set of non-negative and the set of strictly positive real numbers are 
abbreviated by $\R_{\ge 0}$ and $\R_{>0}$ throughout the text.
	Moreover,
	bold letters will denote $n$-dimensional vectors.
   By $\P^n$ and $\P^n_{\R}$, we denote the $n$-dimensional complex and real
   projective spaces, respectively.

\medskip
	
	For a polynomial $p \in \C[\mathbf{z}]$, the imaginary
	projection $\mathcal{I}(p)$ is defined in~\eqref{eq:imagproj1} and its boundary $ \overline{\I(p)}\cap\overline{\I(p)^\compl}$
is denote by $\partial\I(p)$.
\begin{theorem}\cite{jtw-2019}
Let $p\in\C[\z]$ be a complex polynomial. The set $\overline{\I(p)}^\compl$ consists 
of a finite number of convex connected components. 
\end{theorem}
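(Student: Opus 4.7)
The claim decomposes into two assertions: finiteness of the number of connected components of $\overline{\I(p)}^\compl$, and convexity of each component.

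\textbf{Finiteness.} Writing $p(\x+i\y)=A(\x,\y)+iB(\x,\y)$ with $A,B\in\R[\x,\y]$, the variety $\mathcal{V}(p)\subset\C^n\cong\R^{2n}$ corresponds to the real algebraic set $\{A=B=0\}$, and $\I(p)$ is its image under the linear projection $(\x,\y)\mapsto\y$. By the Tarski--Seidenberg theorem $\I(p)$ is semi-algebraic, hence so are $\overline{\I(p)}$ and $\overline{\I(p)}^\compl$. Since any semi-algebraic subset of $\R^n$ has only finitely many connected components, finiteness follows.

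\textbf{Convexity.} The convexity of each component will rest on the classical tube-domain principle: a plurisubharmonic function on a tube $\R^n+iU$ that depends only on $\im\z$ is convex in $\y\in U$. Indeed, restricting such a function to the complex line $t\mapsto i\y^{(0)}+it(\y^{(1)}-\y^{(0)})$ produces a subharmonic function on a vertical strip in $\C$ depending only on $\re t$, and such subharmonic functions are convex in $\re t$. To exploit this, fix a component $C$ of $\overline{\I(p)}^\compl$. For every $\y\in C$ the polynomial $p(\cdot+i\y)$ is zero-free on $\R^n$, so $\log|p|$ is pluriharmonic on the tube $\R^n+iC$. Averaging, in the spirit of the Ronkin function for amoebas, against a suitable positive weight $\rho(\x)$ yields a plurisubharmonic function $u(\y):=\int \log|p(\x+i\y)|\,\rho(\x)\,d\x$ depending only on $\y$. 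The effective domain $\{u<\infty\}$ is then the open set where $u$ is a finite convex function, hence convex; $C$ is a connected component of this domain and therefore convex.

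\textbf{Main obstacle.} The technical crux is ensuring that the averaged $u$ is well-defined, finite throughout $C$, and that its effective domain coincides with $\overline{\I(p)}^\compl$ rather than some strict subset. This demands a careful choice of $\rho$ so that the asymptotic growth of $\log|p(\x+i\y)|$ as $\|\x\|\to\infty$ is absorbed — for instance via compactly supported $\rho$ together with an analysis of zeros at infinity. An alternative is a direct homotopy argument along a segment between two points of $C$: assume the segment exits $C$ at some $t^\ast$, extract $\x_k\in\R^n$ with $p(\x_k+i\y(t_k))=0$ and $\y(t_k)\to\y(t^\ast)$, and derive a contradiction either from a convergent subsequence or, in the escape-to-infinity scenario $\|\x_k\|\to\infty$, from an initial-form analysis of $p$ — precisely the initial-form machinery that permeates the subsequent sections of the paper.
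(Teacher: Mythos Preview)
First, a framing point: the paper does not prove this theorem at all --- it is quoted from \cite{jtw-2019} as background. So there is no ``paper's own proof'' to compare your argument against; what follows is an assessment of your proposal on its own merits.

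Your finiteness argument via Tarski--Seidenberg is correct and standard. The convexity argument, however, has a genuine gap. The Ronkin-style average $u(\y)=\int \log|p(\x+i\y)|\,\rho(\x)\,d\x$ is \emph{not} convex in $\y$, and the tube-domain principle does not apply to it. That principle says: a PSH function on a tube that depends only on $\im\z$ is convex. But the function $\z\mapsto u(\im\z)$ is not plurisubharmonic in $\z$; the integrand $\log|p(\x+i\im\z)|$ depends on $\z$ through the non-holomorphic map $\z\mapsto\im\z$, so plurisubharmonicity is lost before you average. A one-line check: for $n=1$, $p(z)=z$, and $\rho=\tfrac12\mathbf{1}_{[-1,1]}$, one computes $u'(y)=\arctan(1/y)$ for $y>0$, hence $u''(y)=-\,(1+y^2)^{-1}<0$, so $u$ is strictly concave there. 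Even setting this aside, your ``effective domain'' idea fails for compactly supported $\rho$: since $\log|p|$ is locally integrable (its singular set has measure zero), $u(\y)$ is finite for \emph{every} $\y\in\R^n$, so the domain does not detect $\overline{\I(p)}^\compl$ at all.

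Your fallback homotopy sketch also stops short of a proof: from $p(\x_k+i\y_k)=0$ with $\y_k\to\y^\ast\in\partial C$ you recover only $\y^\ast\in\overline{\I(p)}$, which is what you assumed, not a contradiction; and the ``escape to infinity / initial form'' case is not worked out. The argument actually used in \cite{jtw-2019} is different and more direct: one observes that $1/p$ is holomorphic on the tube $\R^n+iC$, and Bochner's tube theorem (equivalently, an elementary Hurwitz-type continuity-of-roots argument along one-variable restrictions $w\mapsto p(\x+i\y^{(0)}+w(\y^{(1)}-\y^{(0)}))$, combined with connectedness of $C$) forces $p$ to be zero-free on $\R^n+i\,\mathrm{conv}(C)$; openness of $\mathrm{conv}(C)$ then gives $\mathrm{conv}(C)\subset\overline{\I(p)}^\compl$, hence $\mathrm{conv}(C)=C$.
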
	
	
We denote by $a_{\rm{re}}$ and $a_{\rm{im}}$ the real and the imaginary parts of a complex number $a\in\C$, i.e., $a$ is written in the form $a_{\rm re}+{\rm i}a_{\rm im}$, such that $a_{\rm re},a_{\rm im}\in\R$.  Let $p\in\C[\z]$ be a complex polynomial. After substituting 
$z_j = x_j+{\rm i}y_j$
for all $1\le j\le n$, the complex polynomial can be written in the form 
\[p(\z) =p_{\rm re}(\x,\y)+{\rm i}p_{\rm im}(\x,\y),\]
 such that $p_{\rm re},p_{\rm im}\in\R[\x,\y]$. We call the real polynomials $p_{\rm re}$ and $p_{\rm im}$, the \textit{real part} and the \textit{imaginary part} of $p$, respectively. Thus, finding $\I(p)$ is equivalent to determining the values of $\y$ for which  the real polynomial system
 \begin{equation}\label{PolySystem} 
p_{\rm re}\,(\x,\y)=0 \; \text{ and } \;
p_{\rm im}(\x,\y)=0
 \end{equation}
has real solutions for $\x$.

	\begin{definition}\label{def:complexConic}
		Let $p\in\C[z_1,z_2]$ be a quadratic polynomial, i.e., $p = a z_1^2 + b z_1 z_2 + c z_2^2 + d z_1 + e z_2 +f$ such that $a,b,c,d,e,f\in\C$. We say that $p$ is the defining polynomial of a complex conic, or shortly, a \textit{complex conic} if its total degree equals two, i.e., at least one of the coefficients $a,b$, or $c$ is non-zero.  A complex conic 
$p$ is called a \textit{real conic} if all coefficients of $p$ are real.
	\end{definition}
		
	The following lemma from \cite{jtw-2019} shows how real linear transformations and complex translations act on the imaginary projection. These are the key ingredients for computing the imaginary projection of every class of conic sections. 
	\begin{lemma}\label{le:group-actions-improj}
		Let $p\in\C[\z]$ and $A\in\R^{n\times n}$ be an invertible matrix. Then \[{\I(p(A\z))=A^{-1}\I(p(\z)).}\]
	
		Moreover, a real translation $\z\mapsto \z+\aaa, \ \aaa\in\R^n$ does not change the imaginary projection. An imaginary translation $\z\mapsto \z+{\rm i}\aaa, \ \aaa\in\R^n$ shifts the imaginary projection into the direction $-\aaa$.
	\end{lemma}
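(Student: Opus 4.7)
The plan is to unwind the definition of the imaginary projection and use that the matrix $A$ and the translation vector $\aaa$ are real, so they respect the decomposition $\z = \x + \mathrm{i}\y$ of a complex vector into its real and imaginary parts. The statement is a set equality, so I would argue by a direct change of variables.

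For the first claim, I would set $q(\z) := p(A\z)$ and observe that $\z \in \mathcal{V}(q)$ if and only if $A\z \in \mathcal{V}(p)$. Since $A \in \R^{n\times n}$, writing $\z = \x + \mathrm{i}\y$ with $\x,\y\in\R^n$ gives $A\z = A\x + \mathrm{i}A\y$, hence $(A\z)_{\rm im} = A\y = A\z_{\rm im}$. Substituting $\w = A\z$ in the definition of $\I(q)$ yields
\[
\I(q) \;=\; \{\z_{\rm im} : A\z \in \mathcal{V}(p)\} \;=\; \{A^{-1}\w_{\rm im} : \w \in \mathcal{V}(p)\} \;=\; A^{-1}\I(p),
\]
where in the middle equality I have also used that $A^{-1}$ is real, so it commutes with taking the imaginary part.

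For the translation claims, I would proceed analogously. With $\aaa \in \R^n$ and $q(\z) := p(\z + \aaa)$, the substitution $\w = \z + \aaa$ gives $\w_{\rm im} = \z_{\rm im}$, so $\I(q) = \I(p)$. With $\aaa \in \R^n$ and $q(\z) := p(\z + \mathrm{i}\aaa)$, the substitution $\w = \z + \mathrm{i}\aaa$ gives $\w_{\rm im} = \z_{\rm im} + \aaa$, i.e.\ $\z_{\rm im} = \w_{\rm im} - \aaa$, so $\I(q) = \I(p) - \aaa$, which is exactly a shift in the direction $-\aaa$.

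There is really no significant obstacle here: the lemma is a bookkeeping statement about how the decomposition into real and imaginary parts interacts with $\R$-linear maps and complex translations. The only point that deserves a moment's care is to note explicitly that the reality of $A$ (respectively $A^{-1}$, respectively $\aaa$) is what allows the operation to be pulled through the imaginary-part map; for a general complex $A$ or $\aaa$, none of these equalities would hold.
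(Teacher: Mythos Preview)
Your proof is correct and complete; the argument is exactly the natural one, namely that a real matrix (and its real inverse) commutes with taking the imaginary part, and likewise for real and purely imaginary translations. Note that the paper itself does not supply a proof of this lemma but simply cites it from \cite{jtw-2019}, so there is no ``paper's own proof'' to compare against; your write-up would serve perfectly well as the omitted justification.
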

By the previous lemma, to classify the imaginary projection of polynomials we consider their orbits under the action of the group $G_n:=\C^n\rtimes \text{GL}_n(\R)$, given by real linear transformations and complex translations. Further let 
 $\text{Aff}(\K^n):=\K^n\rtimes\text{GL}_n(\K)$ be the general affine group for $\K=\R$ or $\K=\C$. The real dimensions of these groups are
\[
\begin{matrix}
\dim_\R(\text{Aff}(\C^n))=2\dim_\R(\text{Aff}(\R^n))=2(n^2+n),&&\dim_\R(G_n)=n^2+2n.
\end{matrix}
\]
 
Up to the action of $G_2$, a real conic $p\in\R[z_1,z_2]$ is equivalent to a conic given by one of the following polynomials.
 			\setlength{\columnsep}{0pt}
\vspace{-2mm}	\begin{multicols}{2}
	\begin{itemize}
		\item[($i$)] $z_1^2+z_2^2-1$ (ellipse),
		\item[($ii$)] $z_1^2-z_2^2-1$ (hyperbola),
		\item[($iii$)] $z_1^2+z_2$ (parabola),
		\item [($iv$)]$z_1^2+z_2^2+1$ (empty set),
		\item[($v$)] $z_1^2-z_2^2$ (pair of crossing lines),
	\item[($vi$)] $z_1^2-1$ (parallel lines/one line $z_1^2$),
		\item [($vii$)]$z_1^2+z_2^2$ (isolated point),
		\item [($viii$)]$z_1^2+1$ (empty set).
	\end{itemize}
\end{multicols}

In \cite{jtw-2019}, a full classification of the imaginary projection for real quadratics was shown. In particular, the following theorem is the classification for real conics. For illustrations of the cases, see 
Figure~\ref{fi:real-classification}. The theorem that comes after provides the imaginary projection of some families of real quadratics. Furthermore, they state the subsequent question as an open problem. 

\begin{theorem}\label{th:RealConicChar}
	Let $p\in\R[z_1,z_2]$ be a real conic. For the normal forms 
	(i)--(viii) from above, the imaginary projections $\I(p)\subseteq\R^2$ 
	are as follows.
	\hspace{-3mm}		\begin{multicols}{2}
		\begin{itemize}
			\item[($i$)] $\I(p) =\R^2$,
			\item[($ii$)] $\I(p) = \{-1\le y_1^2-y_2^2<0\}\cup\{\mathbf{0}\}$,
			\item[($iii$)] $\I(p) =\R^2\setminus\{(0,y_2):y_2\neq 0\}$, 
			\item [($iv$)]$\I(p) =\{\y\in\R^2:y_1^2+y_2^2-1\ge 0\}$, 
			\item[($v$)] $\I(p) =\{\y\in\R^2:y_1^2=y_2^2\}$, 
			\item[($vi$)] $\I(p) =\{\y\in\R^2:y_1=0\}$, 
			\item [($vii$)]$\I(p) =\R^2$,
			\item [($viii$)]$\I(p) =\{\y\in\R^2:y_1=\pm 1\}$.
		\end{itemize}
	\end{multicols}
\end{theorem}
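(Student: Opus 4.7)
The plan is to treat each of the eight normal forms (i)--(viii) in turn, all by the same elimination technique. For a real conic $p\in\R[z_1,z_2]$, the substitution $z_j=x_j+\mathrm{i}y_j$ produces $p_{\rm re},p_{\rm im}\in\R[\x,\y]$, and $\y\in\I(p)$ if and only if the real polynomial system \eqref{PolySystem} admits a real solution $\x$. The structural observation that makes the classification tractable is that, because the quadratic monomials of each normal form lie among $z_1^2$ and $z_2^2$, the imaginary part $p_{\rm im}(\x,\y)$ is \emph{linear} in $\x$ with coefficients in $\R[\y]$. For instance, $p_{\rm im}=2(x_1y_1+x_2y_2)$ in (i),(iv),(vii), $p_{\rm im}=2(x_1y_1-x_2y_2)$ in (ii),(v), $p_{\rm im}=2x_1y_1+y_2$ in (iii), and $p_{\rm im}=2x_1y_1$ in (vi),(viii).

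The uniform procedure is then: away from the degeneracies $y_1=0$, $y_2=0$, or (for (ii),(v)) $y_1^2=y_2^2$, solve $p_{\rm im}=0$ for one of the $x_i$ and substitute into $p_{\rm re}=0$; the resulting single equation in the remaining variable is at most quadratic, and its real solvability translates into an explicit semialgebraic condition on $\y$. The remaining degenerate loci are handled separately by symmetry or direct inspection. This routine dispatches cases (i), (iii), (iv), (vi), (viii) immediately: e.g.\ case (iv) reduces to $x_2^2(y_1^2+y_2^2)=y_1^2(y_1^2+y_2^2-1)$, yielding the disc condition $y_1^2+y_2^2\ge 1$; and cases (vi), (viii) depend only on $z_1$ and collapse to one-variable computations. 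Case (i) behaves similarly but with $(y_1^2+y_2^2-1)$ replaced by $(y_1^2+y_2^2+1)$, so the right-hand side is strictly positive and $\I(p)=\R^2$.

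Two cases deserve extra attention. Case (vii), $p=z_1^2+z_2^2$: the variety is merely the union of the complex lines $z_2=\pm\mathrm{i}z_1$, but parametrising either line by $z_1=x_1+\mathrm{i}y_1$ gives imaginary part $\y=(y_1,\pm x_1)$, which sweeps out all of $\R^2$, so $\I(p)=\R^2$. Case (ii), the hyperbola $z_1^2-z_2^2-1$, will be the main obstacle: eliminating $x_1$ yields the relation $x_2^2(y_2^2-y_1^2)=y_1^2(1+y_1^2-y_2^2)$, and the sign of $y_2^2-y_1^2$ flips across the asymptotic cone $|y_1|=|y_2|$, forcing a careful case distinction. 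The strict bound $y_2^2-y_1^2>0$ and the closed bound $y_2^2-y_1^2\le 1$ arise from two different mechanisms (indeterminacy of the linear elimination on $\{y_1^2=y_2^2\}$ versus non-negativity of the discriminant), and the isolated point $\mathbf{0}\in\I(p)$ appears as a separate solution branch at $\y=\mathbf{0}$, where the collapsed system reduces to the real hyperbola $x_1^2-x_2^2=1$. Reconciling these pieces and verifying the stated description $\{-1\le y_1^2-y_2^2<0\}\cup\{\mathbf{0}\}$ is the main technical step; once finished, the analogous but lighter analysis of (v) (where the right-hand side collapses, forcing $y_1^2=y_2^2$) proceeds without further subtlety.
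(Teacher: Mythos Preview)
Your argument is correct. Note, however, that the paper does not supply its own proof of this theorem: it is quoted from \cite{jtw-2019} as background in the preliminaries. So there is no ``paper's proof'' to compare against directly. That said, your elimination technique---splitting into $p_{\rm re}=p_{\rm im}=0$, using the linearity of $p_{\rm im}$ in $\x$ to solve for one $x_i$, and reducing to a discriminant condition---is exactly the method the paper itself deploys in Sections~\ref{se:QuadraticsWithHyperbolicInit} and~\ref{se:non-hyperbolic} when it extends this classification to complex conics (see especially Subsection~\ref{subs:Conic-Hyp-Init} for cases (1a.1), (1a.2), (1b), and the treatment of (2c.1)). Your handling of the degenerate loci $y_1=0$, $y_2=0$, and $y_1^2=y_2^2$ as separate sub-cases, and the identification of the isolated point $\mathbf{0}$ in case~(ii) via the collapsed system $x_1^2-x_2^2=1$, mirror precisely what the paper does in the analogous complex situations (compare the origin check in case~(1b) and in Theorem~\ref{th:ndimhyperbol1}). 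In short: your approach is correct and is the same one the paper uses throughout; it simply was not written out here because the real case was already in the literature.
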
	

\begin{theorem}\label{th:real-Quad}
	Let $p\in\C[z_1,\dots,z_n]$ be $p = \sum_{i=1}^{n-1}z_i^2-z_{n}^2+k$ for  $k\in\{\pm 1\}$. Then 
	\[
	\I(p) = \begin{cases}
	\left\{\y \in \R^n \ : \ y_n^2<\sum_{i=1}^{n-1} y_i^2 \right\}
	\cup \{\mathbf{0}\}  & \text{if } k =1,\vspace{1mm} \\
	\vspace{1mm}
	\left\{\y \in \R^n \ : \ y_{n}^{2}-\sum_{i=1}^{n-1} y_i^2 \le 1 \right\}
	& \text{if } k =-1. \\
	\end{cases}
	\]
\end{theorem}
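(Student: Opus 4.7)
The plan is to reduce the statement to a question about the image of an indefinite quadratic form restricted to a hyperplane. Substituting $z_j=x_j+\mathrm{i}y_j$ and introducing the Lorentzian form $Q(\x):=\sum_{i=1}^{n-1}x_i^2-x_n^2$ together with its polar bilinear form $B(\x,\x'):=\sum_{i=1}^{n-1}x_ix'_i-x_nx'_n$, a direct expansion gives $p_{\rm re}(\x,\y)=Q(\x)-Q(\y)+k$ and $p_{\rm im}(\x,\y)=2B(\x,\y)$. By~\eqref{PolySystem}, $\y\in\I(p)$ iff the system $B(\x,\y)=0$, $Q(\x)=Q(\y)-k$ has a real solution $\x$. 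Setting $\alpha:=Q(\y)$ and $c:=\alpha-k$, the task becomes deciding whether $c$ lies in the image of $Q$ restricted to the hyperplane $H_\y:=\{\x\in\R^n:B(\x,\y)=0\}$.

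For $\y=\mathbf{0}$ the linear equation is vacuous and $Q$ is indefinite on $\R^n$, so every value $c=-k$ is attained and $\mathbf{0}\in\I(p)$ in both cases. For $\y\neq\mathbf{0}$ the hyperplane $H_\y$ is precisely the $B$-orthogonal complement of $\y$, so I will compute the image of $Q|_{H_\y}$ by a signature argument, exploiting that $Q$ has signature $(n-1,1)$ on $\R^n$. The trichotomy is driven by the sign of $\alpha=B(\y,\y)$. If $\alpha\neq 0$, then $\R^n=\R\y\oplus H_\y$ is a $B$-orthogonal direct sum, and Sylvester's law of inertia yields that $Q|_{H_\y}$ has signature $(n-2,1)$ when $\alpha>0$ (so for $n\ge 3$ it is indefinite and nondegenerate, with image $\R$) and signature $(n-1,0)$ when $\alpha<0$ (positive definite, image $[0,\infty)$). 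If $\alpha=0$, then $\y\in H_\y$; picking a hyperbolic partner $\w$ with $B(\w,\y)=1$ splits $\R^n$ as the hyperbolic plane $\R\y\oplus\R\w$ (signature $(1,1)$) together with its $B$-orthogonal complement $V$ of signature $(n-2,0)$, and since $H_\y=\R\y\oplus V$ with $Q(a\y+v)=Q(v)\ge 0$ for $v\in V$, the image of $Q|_{H_\y}$ is again $[0,\infty)$.

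It remains to compare $c=\alpha-k$ with these images. For $k=1$, $c=\alpha-1$ lies in the image iff $\alpha>0$ (since for $\alpha\le 0$ the image is $[0,\infty)$ while $c<0$), which together with $\mathbf{0}\in\I(p)$ yields the first formula. For $k=-1$, $c=\alpha+1$ lies in the image iff $\alpha>0$, or $\alpha\le 0$ with $\alpha+1\ge 0$, i.e.\ iff $\alpha\ge-1$; this rewrites as $y_n^2-\sum_{i=1}^{n-1}y_i^2\le 1$, giving the second formula. The main obstacle I anticipate is the degenerate null case $\alpha=0$, where the direct $B$-orthogonal splitting of $\R^n$ along $\y$ breaks down and one must use the hyperbolic-plane decomposition above to conclude that $Q|_{H_\y}$ remains positive semidefinite with image $[0,\infty)$.
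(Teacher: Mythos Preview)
The paper does not actually prove this theorem; it is quoted from \cite{jtw-2019} and then invoked as a black box inside the proof of Theorem~\ref{th:ndimhyperbol1} (``Moreover, Theorem~\ref{th:real-Quad} shows the cases where $\gamma=\pm 1$''). So there is no paper proof to compare against directly.

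Your argument is correct for $n\ge 3$, and it is a genuinely different route from what the paper does in the closely related computation. In the proof of Theorem~\ref{th:ndimhyperbol1}, the paper handles the analogous system by applying an explicit orthogonal transformation $T$ on $\R^{n-1}$ that sends $(y_1,\dots,y_{n-1})$ to $(\sqrt{\sum y_i^2},0,\dots,0)$, thereby reducing to a two-variable elimination followed by a discriminant check. Your approach instead works coordinate-free: you identify the solvability question with the image of the Lorentz form $Q$ restricted to the $B$-orthogonal hyperplane $H_\y$, and read off that image from the signature $(n-2,1)$, $(n-1,0)$, or the degenerate null case via a hyperbolic-plane splitting. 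This is cleaner conceptually and avoids any computation; the paper's method is more hands-on but has the advantage of producing explicit inequalities that one can track through the non-real perturbations $\gamma\notin\R$ treated there.

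One point worth flagging explicitly: your parenthetical ``so for $n\ge 3$ it is indefinite'' is essential, and you should carry that restriction into the final paragraph. For $n=2$ the signature $(n-2,1)=(0,1)$ is negative definite and the image for $\alpha>0$ is $(-\infty,0]$, not $\R$; likewise in the null case $V=\{0\}$ and the image collapses to $\{0\}$. In fact the theorem as stated is \emph{false} for $n=2$ (e.g.\ for $k=1$ the point $\y=(2,0)$ satisfies $y_2^2<y_1^2$ but is not in $\I(p)$, as one checks directly), so your restriction to $n\ge 3$ is the correct scope and should be stated up front rather than buried.
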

\begin{figure}
	\begin{subfigure}{.24\textwidth}
		\centering
		\includegraphics[width=1.2in]{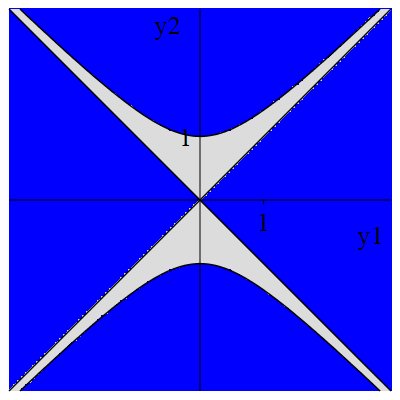}
	\end{subfigure}%
	\begin{subfigure}{.24\textwidth}
		\centering
		\includegraphics[width=1.2in]{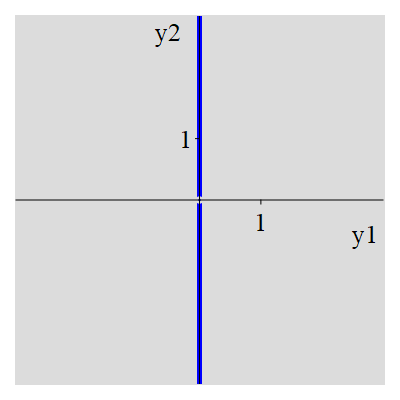}
	\end{subfigure}%
	\begin{subfigure}{.24\textwidth}
		\centering
		\includegraphics[width=1.2in]{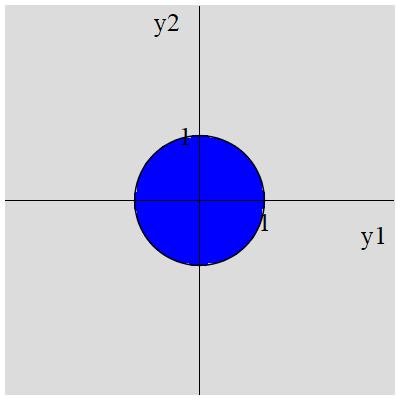}
	\end{subfigure}
	\\
	\begin{subfigure}{.24\textwidth}
		\centering
		\includegraphics[width=1.2in]{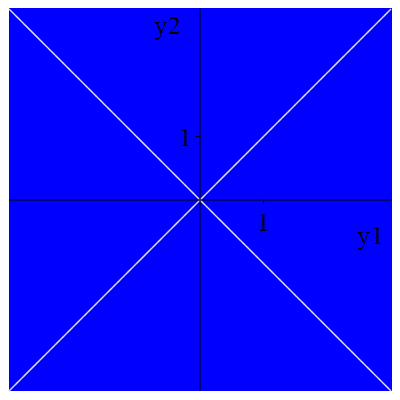}
	\end{subfigure}%
	\begin{subfigure}{.24\textwidth}
		\centering
		\includegraphics[width=1.2in]{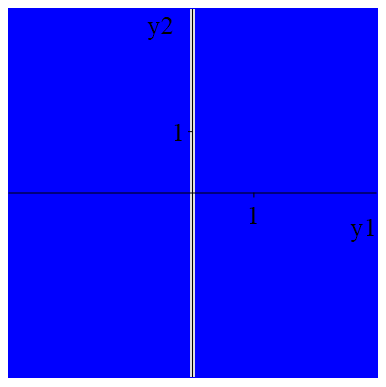}
	\end{subfigure}%
	\begin{subfigure}{.24\textwidth}
		\centering
		\includegraphics[width=1.2in]{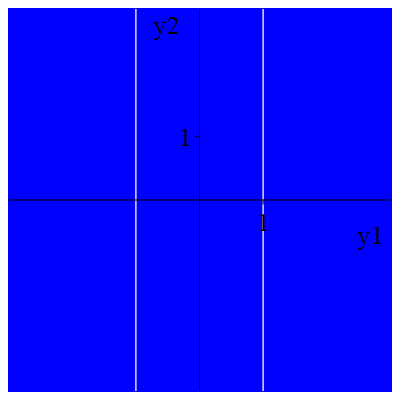}
	\end{subfigure}
	\caption{The imaginary projections of the real conic sections and their complements are colored in gray and blue, respectively. The cases $(i)$ and $(vii)$ are skipped, as their
		imaginary projection is the whole plane.}\vspace{-2mm}
	\label{fi:real-classification}
\end{figure}
The following question, which is true for real quadratics $p\in\C[\z]$,
 was asked in \cite[Open problem 3.4]{jtw-2019}. 
In Section~\ref{subs:real-non-real},
we show that it is not true in general even for complex conics.

\begin{question}\label{que:open-close}
	Let $p\in\C[\z]$ be a polynomial. Is $\I(p)$ open if and only if $\I(p)=\R^n$?
\end{question} 

We use the {\it initial form} of $p$ abbreviated by $\init(p)(\z)=p^h(\z,0)$ , where $p^h$ is the homogenization of $p$.  The initial form consists of the terms of $p$ with the maximal total degree. Furthermore, a complex polynomial $p \in \C[\z]$
is called \emph{hyperbolic} w.r.t. $\e\in\R^n$ if the univariate polynomial $t\mapsto p(\x+t\e)$ is real-rooted. Note that any hyperbolic polynomial is a, possibly complex, multiple of a real polynomial.

\medskip

Finally, a \textit{spectrahedron} is a set of the form
\vspace{-2mm}
\[
\{ \x \in \R^n \ : \ A_0 + \sum_{j=1}^n A_j x_j \succeq 0\},\vspace{-2mm}
\]
where $A_1, \ldots, A_n$ are real symmetric matrices of size $d$. Here, ``$\succeq 0$'' denotes the positive semidefiniteness
of a matrix. We also speak of a spectrahedral set if the set is given by positive
definite conditions, i.e., by strict conditions.

\medskip

	
	\section{Imaginary projections of complex plane curves\label{se:higherdegree}}
 In this section, we determine the imaginary projection of some families of arbitrarily high degree complex plane curves.
Our point of departure is the characterization of real conics in 
Theorem \ref{th:RealConicChar}. In the following example, which is an affine version of case ($B_{+}$) in Newstead's classification \cite{Newstead}, 
we show that by allowing non-real coefficients the imaginary projection of a complex conic can significantly change in terms of the algebraic degree of its boundary. See Corollary \ref{co:alg-degrees}. 
 
 \begin{remark}\label{re:quarticRoots}
 	Recall that the discriminant of a univariate polynomial
 	$p(z) = \sum_{j=0}^n a_j z^j$ is given
 	by $\Disc(p) = (-1)^{\frac{1}{2}n(n-1)}\frac{1}{a_n} \Res(p,p')$,
 	where $\Res$ denotes the resultant. For a quartic, having negative discriminant implies the existence of a real root. However, a positive discriminant can correspond to either four 
 	real roots or none. Let 	
 	{\small
 		\[
 		P  =  8 a_2 a_4 - 3 a_3^2, \,
 		R  =  a_3^{3}+8a_1a_4^{2}-4a_4a_3a_2,\,
 		D  =  64a_4^{3}a_0-16a_4^{2}a_2^{2}+16a_4a_3^{2}a_2-16a_4^{2}a_3a_1-3a_3^{4}.
 		\]}
 	If $\Disc(p)>0$, then $p=0$ has four real roots if $P < 0$ and $D<0$, and no real roots otherwise. Finally, if the discriminant is zero, the only conditions under which there is no real solution is having $D=R=0$ and $P>0$
 	(see, e.g., \cite[Theorem 9.13 (vii)]{janson-2011}).
 \end{remark} 
 
 \begin{example}\label{ex:caseB}
 	Let $p=z_1^2+{\rm i}z_2^2+z_2$.  For simplifying the calculations, we use the translation $z_2\mapsto z_2+{\rm{i}}/2$ to eliminate the linear term.
 	This turns the equation $p=0$ into 
 	$
 	q := z_1^2+{\rm i}z_2^2+{\rm{i}}/4=0.
 	$
 	Building the real polynomial system as introduced in (\ref{PolySystem}) implies
 	\[q_{\mathrm{re}} = x_1^2-2x_2y_2-y_1^2 = 0 \; \text{ and } \;
 	q_{\mathrm{im}} = 4x_2^2 +8x_1y_1 - 4y_2^2 + 1= 0.
 	\]
 	
 	First assume $y_1\neq 0$. Substituting $x_1$ from $q_{\mathrm{im}}=0$ into $q_{\mathrm{re}}=0$ gives\[
 	16x_2^4 + (-32y_2^2 + 8)x_2^2 - 128y_1^2y_2x_2 - 64y_1^4 + 16y_2^4 - 8y_2^2 + 1 = 0.
 	\]
 	
 	We calculate the discriminant of the above equation with respect to $x_2$. By the previous remark, there  is a real solution for $x_2$ if the discriminant is negative, i.e.,
 	\[
 	-64y_1^8 - 128y_1^4y_2^4 - 64y_2^8 - 80y_1^4y_2^2 + 48y_2^6 + y_1^4 - 12y_2^4 + y_2^2< 0.
 	\]
 	
 	Now we need to check the conditions where the discriminant is zero or positive. To show the positive discriminant implies no real solution for $x_2$,
we rewrite the condition with the substitution $u = y_1^4$:
 	\[
 	\Delta:=-64u^2 + (-128y_2^4 - 80y_2^2 + 1)u - 64y_2^8 + 48y_2^6 - 12y_2^4 + y_2^2>0.
 	\]
 	
 	It is a quadratic polynomial in $u$ with negative leading coefficient. It can only be positive between the two roots for $u$ in $\Delta=0$. Those are
 	\[
 	-y_2^4 - \frac{5}{8}y_2^2 + \frac{1}{128} \pm \frac{\sqrt{32768y_2^6 + 3072y_2^4 + 96y_2^2 + 1}}{128}.
 	\]
 	
 	To obtain $\Delta >0$, we need to have a solution $u>0$, i.e., we need to have either $-y_2^4 - \frac{5}{8}y_2^2 + \frac{1}{128}\ge 0$ or otherwise 
 	{\small
 		\[
 		\left(-y_2^4 - \frac{5}{8}y_2^2 + \frac{1}{128}\right)^2>\frac{32768y_2^6 + 3072y_2^4 + 96y_2^2 + 1}{128^2}.
 		\] }
 	
 	The first inequality implies $y_2^2\le \frac{3\sqrt{3}-5}{16}$ and after simplifications the second inequality implies $y_2^2< 1/4$. The polynomial $P$ from the previous remark for the quartic polynomials evaluates to $
 	4(1-4y_2^2),$ which is positive for $y_2^2< 1/4$. Therefore, for $\Delta>0$, there is no real solution for $x_2$.  It remains now to consider the case $\Delta=0$. Since $y_1\neq 0$, to have $R=-262144y_2y_1^2=0$ we need $y_2=0$. Substituting $y_2=0$ in $D=0$ implies $-4096y_1^4 - 960=0$, which is a contradiction. Therefore, if $y_1\neq 0$, the imaginary projection of $q$ consists of points $\y \in\R^2$ for which $\Delta\le 0$.

 	Now assume $y_1=0$. From $q_{\mathrm{im}}=0$ we can observe that 
 	$\mathbf{0} \not\in \mathcal{I}(q)$. Thus, assume $y_2\neq 0$. Solving $q_{\mathrm{re}}=0$ for $x_2$ and substituting in $q_{\mathrm{im}}=0$ implies
 	$x_1^4-y_2^2(4y_2^2 - 1)=0.$	
 	This equation has a real solution if and only if $-y_2^2(4y_2^2 - 1)\le0$. Substituting $y_1=0$ in $\Delta$ allows to write $\Delta$ in terms of $y_2$, which gives 
 	$\Delta_{y_2} = -y_2^2(4y_2^2 - 1)^3.$
 	Therefore, the imaginary projection on the $y_2$-axis is $\{(0,y_2)\in\R^2 :\Delta_{y_2}\le 0\}\setminus\{(0,0)\}$. Thus,
 	\[
 	\mathcal{I}(q)=\{\y \in\R^2:-64y_1^8 - 128y_1^4y_2^4 - 64y_2^8 - 80y_1^4y_2^2 + 48y_2^6 + y_1^4 - 12y_2^4 + y_2^2\le 0\}\setminus\{\mathbf0\}.
 	\]
 	
 	\medskip
 	
 The irreducibility of the polynomial above over $\C$ can
 be verified for example using {\sc Maple}. For the original polynomial $p$, this gives the inequality description for 
 	$\mathcal{I}(p)$ stated in~\eqref{eq:example1} in the Introduction. 
 \end{example} 	 
	
	Even in the case of real polynomials, extending the case of real conics by letting the degree or the number of variables be greater than two  dramatically increases the difficulty of characterizing the imaginary projection. Let us see one such example of a cubic plane curve, i.e., where we have two unknowns and the total degree is three.

	\begin{example}
		Let $p\in\R[\z] = \R[z_1,z_2]$ be of the form $p = z_1^3 + z_2^3 - 1$. The similar attempt as before to calculate the imaginary projection $\I(p)$ is to separate the real and the imaginary parts of $p$ according to \eqref{PolySystem},
		\[
		p_{\rm re} = x_1^3 - 3x_1y_1^2 + x_2^3 - 3x_2y_2^2 - 1=0 \;
		\text{ and } \;
		p_{\rm im} = 3x_1^2y_1 + 3x_2^2y_2 - y_1^3 - y_2^3    =0.
		\]

		Despite the simplicity of the polynomial $p$, one cannot use the previous techniques to find the values of $\y \in\R^2$ such that the above system has real solutions for $\x$. The reason is that both $x_1$ and $x_2$ appear in higher degree than one in both equations. The resultant with respect to one of $x_1$ or $x_2$ is a univariate polynomial of degree six in the other, where we lack the exact tools to specify the reality of the roots.
	\end{example}	

		In the following theorem, we show that the imaginary projection of a generic complex plane curve of odd degree is
	 the whole plane. 
		
		\begin{thm}\label{th:EvenDeg}
			Let $p\in\C[z_1,z_2]$ be a complex bivariate polynomial of total degree $d$ such that its initial form has no real roots in $\P^1$. If $d$ is odd then the imaginary projection $\mathcal{I}(p)$ is $\R^2$.
As a consequence, the imaginary projection of a generic
complex bivariate polynomial of odd total degree is $\R^2$.  
		\end{thm}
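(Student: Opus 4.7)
The plan is to reduce the claim, for fixed but arbitrary $\y \in \R^2$, to showing that the polynomial $q(x_1,x_2) := p(x_1 + \mathrm{i} y_1, x_2 + \mathrm{i} y_2)$, viewed as a continuous map $\R^2 \to \C$, possesses a real zero. The top-degree part of $q$ in $(x_1,x_2)$ is exactly $\init(p)(x_1,x_2)$, so the behaviour of $q$ on circles of large radius is governed by $\init(p)$. I would adapt the topological (winding-number) proof of the fundamental theorem of algebra to the real-variable setting, using the odd-degree hypothesis to extract an antipodal symmetry that forces a nonzero winding number.

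More precisely, define $h \colon S^1 \to \C$ by $h(\theta) := \init(p)(\cos\theta, \sin\theta)$. Since $\init(p)$ has no real root in $\P^1$ and $\theta \mapsto [\cos\theta : \sin\theta]$ parametrises $\P^1_\R$, the map $h$ never vanishes, so $h\colon S^1 \to \C^*$. Because $d$ is odd, $h(\theta + \pi) = (-1)^d h(\theta) = -h(\theta)$; lifting the argument of $h$ to a continuous function $\alpha\colon \R\to\R$, this forces $\alpha(\theta+\pi)-\alpha(\theta)$ to be a constant odd multiple of $\pi$, so that $\alpha(2\pi)-\alpha(0)$ is twice that. Hence the winding number of $h$ about the origin is an odd -- in particular nonzero -- integer.

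For the comparison step, set $F_R(\theta) := R^{-d} q(R\cos\theta, R\sin\theta)$. Writing $q = \init(p) + r$ with $r$ of total degree at most $d-1$ in $(x_1,x_2)$, one obtains $F_R(\theta) = h(\theta) + O(R^{-1})$ uniformly in $\theta$ as $R \to \infty$. Since $h$ is bounded away from $0$ on the compact circle, for all sufficiently large $R$ the loop $F_R$ lies in $\C^*$ and is straight-line homotopic to $h$ there, so it shares the odd winding number of $h$. To close the argument, assume for contradiction that $q$ has no real zero. Then $q$ maps the closed disk $\overline{D_R}$ into $\C^*$, so the family of loops $q|_{\partial D_r}$ for $r \in [0,R]$ provides a null-homotopy of $q|_{\partial D_R}$ in $\C^*$; its winding number about the origin must therefore vanish, contradicting the odd value just computed.

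The main technical point is the antipodal winding-number calculation; the uniform comparison of $q$ to $\init(p)$ on large circles is routine. The odd-degree hypothesis enters exactly at the identity $h(\theta+\pi) = (-1)^d h(\theta)$: for even $d$ the map $h$ factors through $\R\P^1$, its winding number is even (possibly zero), and the above argument breaks down -- consistent with the fact that, for even $d$, a stronger hypothesis is needed and is treated separately in Corollary~\ref{co:wholeplane2}.
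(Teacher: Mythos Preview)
Your proof is correct and takes a genuinely different route from the paper. The paper fixes $\y\in\R^2$, separates $p$ into its real and imaginary parts $p_{\rm re},p_{\rm im}\in\R[x_1,x_2]$, homogenizes both, and invokes the classical fact that a real projective plane curve of odd degree contains a pseudoline; hence the two odd-degree curves $p_{\rm re}^h=0$ and $p_{\rm im}^h=0$ must meet in $\P_\R^2$, and the no-real-roots hypothesis on $\init(p)$ rules out an intersection at infinity. You instead treat $q(x_1,x_2)=p(x_1+{\rm i}y_1,x_2+{\rm i}y_2)$ as a single continuous map $\R^2\to\C$ and run a Brouwer-degree / winding-number argument \`a la the fundamental theorem of algebra: the antipodal relation $h(\theta+\pi)=-h(\theta)$ forces the winding number of $h=\init(p)|_{S^1}$ to be odd, and the standard large-radius comparison and null-homotopy contradiction finish the job. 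Your approach is more self-contained (no appeal to the topology of real projective plane curves) and sidesteps the small check, implicit in the paper's argument, that both $p_{\rm re}$ and $p_{\rm im}$ genuinely have degree $d$. The paper's approach, on the other hand, makes the algebraic-geometric structure more visible and connects to a standard reference. One minor remark: your closing comment about Corollary~\ref{co:wholeplane2} is slightly off---that corollary does not strengthen the hypothesis for even $d$, but rather extends the theorem to any $p$ having an odd-degree \emph{factor} satisfying the hypothesis.
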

		\begin{proof}
			
			Since the initial form has no real roots, it can be written in the form
			\[
			\init(p) = \prod_{j=1}^{d}(z_1-\alpha_j z_2),
			\]
			where $\alpha_j\notin\R$ for $1\le j \le d$. 		
			Substitute $z_j=x_j+ {\rm i}y_j$ for $j=1,2$ in $p$ and form the polynomial system $p_{\rm re} = p_{\rm im} = 0$  as introduced in (\ref{PolySystem}). For any fixed $\y \in\R^2$, both equations are of total degree $d$ in $x_1$ and $x_2$. Denote by $p_{\mathrm{re}}^h$ and $p_{\mathrm{im}}^h$, the homogenization of these two polynomials by a new variable $x_3$.
			Since both, $p_{\mathrm{re}}^h$ and $p_{\mathrm{im}}^h$, have odd degree, the number of complex intersection points (counted with multiplicities) is odd while the number of non-real intersection points (counted with multiplicities) is even. Thus, there is a real intersection point 
in $\P^2_{\R}$.
			We claim that this intersection point lies in the affine piece where $x_3 = 1$.  This implies that for any given $\y \in\R^2$, there exist $x_1,x_2\in\R$ for which $p_{\mathrm{re}}=p_{\mathrm{im}}=0$ and therefore completes the proof.
			
			To prove our claim, we show that the two curves defined by $p_{\mathrm{re}}^h=0$ and $p_{\mathrm{im}}^h=0$ do not intersect at infinity, i.e., their intersection point has $x_3\neq 0$. Let us assume that they intersect at infinity and set $x_3 = 0$ in $p_{\mathrm{re}}^h$ and $p_{\mathrm{im}}^h$. This substitution turns the complex polynomial $p_{\mathrm{re}}^h+ {\rm i}p_{\mathrm{im}}^h$ into 
			\[
			q:=\prod_{j=1}^{d}(x_1-\alpha_j x_2).
			\]	
			
			Thus, for the two projective curves to intersect at infinity we need to have $q=0$. Since $\alpha_j\notin\R$ for $1\le j \le d$, the only real solution for $x_1$ and $x_2$ is zero. This is a contradiction.
		\end{proof}
	
	\begin{cor}
	  \label{co:wholeplane2}
		Let $p\in\C[z_1,z_2]$ be a complex bivariate polynomial. The imaginary projection $\I(p)$ is $\R^2$ if $p$ has a factor $q$ such that the total degree of $q$ is odd and its initial form has no real roots in $\P^1$.
	\end{cor}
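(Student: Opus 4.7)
The plan is to reduce the statement directly to Theorem~\ref{th:EvenDeg} by using the fact that the imaginary projection is monotonic under factorization. Specifically, if $p = q \cdot r$ in $\C[z_1,z_2]$, then the variety factorizes as $\mathcal{V}(p) = \mathcal{V}(q) \cup \mathcal{V}(r)$. Since the imaginary projection is defined as the image of the variety under the coordinate-wise imaginary-part map $\z \mapsto \z_{\rm im}$, and images of unions are unions of images, we obtain
\[
\I(p) \;=\; \I(q) \;\cup\; \I(r) \;\supseteq\; \I(q).
\]
This monotonicity is the only structural observation needed.

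Given this, I would proceed in two short steps. First, write $p = q\cdot r$ where $q$ has odd total degree and initial form with no real roots in $\P^1$, as guaranteed by hypothesis. Second, apply Theorem~\ref{th:EvenDeg} to $q$ to conclude $\I(q) = \R^2$, from which $\I(p) \supseteq \R^2$, and hence $\I(p) = \R^2$ since trivially $\I(p)\subseteq\R^2$.

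There is essentially no obstacle here beyond verifying the monotonicity inclusion, which is immediate from the set-theoretic definition of $\I(\cdot)$ in~\eqref{eq:imagproj1}. One could alternatively phrase the same reduction by observing that every zero of $q$ is a zero of $p$, so every point in the image $\I(q)$ lies in $\I(p)$. Either formulation gives a one-line proof once Theorem~\ref{th:EvenDeg} is invoked, so the corollary is really just a convenient repackaging of the theorem that accommodates reducible polynomials with an odd-degree factor whose initial form is fully non-real.
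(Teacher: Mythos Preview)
Your proof is correct and follows essentially the same approach as the paper: both use the identity $\I(p_1 \cdot p_2) = \I(p_1) \cup \I(p_2)$ (or the inclusion $\I(p) \supseteq \I(q)$ for a factor $q$) and then invoke Theorem~\ref{th:EvenDeg} on the odd-degree factor.
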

		\begin{proof}
			Since for $p_1,p_2 \in \C[\z]$, we have $\I(p_1 \cdot p_2)
			= \I(p_1) \cup \I(p_2)$, we claim that if there is a factor $q$ in $p$ whose imaginary projection is $\R^2$, then $\I(p)=\R^2$. The result now follows from the previous theorem.
		\end{proof}
	
		In the following section, instead of the dimension we set the degree to be two and characterize the imaginary projection for a 
certain family of quadratic hypersurfaces.


		\section{Complex quadratics with hyperbolic initial form\label{se:QuadraticsWithHyperbolicInit}}

As we have seen in Example \ref{ex:caseB}, the methods used to compute the imaginary projection of real quadratics is not always useful for complex ones. However, for a certain family, namely the quadratics with hyperbolic initial form, one can build up on the methods for the real case. To classify the imaginary projections of any family of polynomials, Lemma \ref{le:group-actions-improj} suggests bringing them to their proper normal forms. 

\begin{lemma}\label{le:hyp-Init-quadratic-forms}
Under the action of $G_n$, any quadratic polynomial $p\in\C[z_1, \dots, z_n]$ with hyperbolic initial form can be transformed to one of the following normal forms:\vspace{2mm}

\begin{enumerate}
	\item $z_1^2+\alpha z_2+ r z_3+\gamma$,\\
	\item $\sum_{i=1}^{j}z_i^2 - z_{j+1}^2+\alpha z_{j+2}+r z_{j+3}+\gamma \qquad\text{for some } j=1,\dots,n-1$,\vspace{3mm}
\end{enumerate} 

\noindent such that terms containing $z_k$  do not appear for $k>n$, and $\alpha,r,\gamma\in\C$.
\end{lemma}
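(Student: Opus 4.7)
The plan is to reduce $p$ in three stages, using in turn the three freedoms available: a rescaling of $p$ by a nonzero complex scalar (which does not affect $\I(p)$), the $\GL_n(\R)$-factor of $G_n$, and the complex-translation factor of $G_n$.

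First, since by assumption $\init(p)$ is hyperbolic up to a complex scalar, after rescaling $p$ I may assume $\init(p)$ is a real hyperbolic quadratic form. A direct inspection of the hyperbolicity condition rules out the positive or negative definite cases of rank $\ge 2$ and forces the signature of $\init(p)$, up to an overall sign, to be $(1,0)$ or $(j,1)$ for some $1 \le j \le n-1$. After the sign flip (absorbed into the rescaling) and a real linear change of variables drawn from the $\GL_n(\R)$-part of $G_n$, $\init(p)$ becomes one of the listed forms: $z_1^2$ in case $(1)$ or $\sum_{i=1}^{j} z_i^2 - z_{j+1}^2$ for some $1 \le j \le n-1$ in case $(2)$. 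The linear and constant parts of $p$ are transported along but remain of the form $\sum_k a_k z_k + b$ with $a_k, b \in \C$.

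Second, I would complete the square in each variable $z_i$ appearing in $\init(p)$ by a complex translation $z_i \mapsto z_i - a_i/(2\varepsilon_i)$, where $\varepsilon_i \in \{\pm 1\}$ is the coefficient of $z_i^2$ in $\init(p)$. Such translations lie in $G_n$ and leave $\init(p)$ unchanged, while eliminating the linear terms in $z_1$ in case $(1)$, or in $z_1, \dots, z_{j+1}$ in case $(2)$, absorbing the induced constants into a new $\gamma \in \C$. The residual linear part is then a complex linear form $L(z) = \sum_k a_k' z_k$ whose sum ranges over those indices $k$ that do not appear in $\init(p)$.

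Third, I would apply an element of the stabilizer $H$ of $\init(p)$ inside $\GL_n(\R)$. A direct computation shows that $H$ consists of block-lower-triangular matrices $\bmatrix{A_{11} & 0 \\ A_{21} & A_{22}}$, where $A_{11}$ preserves the non-degenerate part of $\init(p)$ (so $A_{11} \in O(1)$ in case $(1)$ and $A_{11} \in O(j,1)$ in case $(2)$), $A_{22}$ is an arbitrary element of $\GL_{n-j-1}(\R)$ (respectively $\GL_{n-1}(\R)$), and $A_{21}$ is arbitrary real. Setting $A_{21}=0$ preserves the earlier simplifications. Splitting $a_k' = u_k + \mathrm{i}\,v_k$ with $u_k, v_k \in \R$ yields two real linear forms $L_u, L_v$ in the free variables, whose real span has dimension at most $2$. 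I choose $A_{22}$ so that $A_{22}^T$ sends $\myspan\{L_u, L_v\}$ into the coordinate span of the first two free variables; then $L$ becomes $\alpha z_{j+2} + r z_{j+3}$ in case $(2)$ (respectively $\alpha z_2 + r z_3$ in case $(1)$) for suitable $\alpha, r \in \C$. Whenever $j+2$ or $j+3$ exceeds $n$, the corresponding variable does not exist and its coefficient is forced to vanish, which is exactly the condition in the statement.

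The main technical point is the third stage: one needs $H$ to be simultaneously large enough to collapse the real and imaginary parts of the residual linear form onto two coordinates, and constrained enough that its action does not spoil the preceding normalizations. The block structure of $H$ combined with the elementary fact that any two vectors in $\R^{n-j-1}$ (or $\R^{n-1}$) lie in some $2$-dimensional coordinate subspace after a real basis change is exactly what provides this balance.
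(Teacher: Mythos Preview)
Your proof is correct and follows essentially the same three-stage strategy as the paper: normalize the hyperbolic quadratic initial form via a real linear change, eliminate the linear terms in the variables occurring in $\init(p)$ by complex translations, and then reduce the residual complex linear form to two free variables by splitting it into real and imaginary parts and applying a real linear map on the free block. Your treatment of the third stage is more explicit than the paper's --- you identify the stabilizer $H$ of $\init(p)$ in $\GL_n(\R)$ and its block-lower-triangular structure to justify that this last real change does not disturb the earlier normalizations --- whereas the paper simply asserts the reduction of each of the two real linear forms to a single variable.
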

\begin{proof}
	The initial form $\init(p)$ is a hyperbolic polynomial of degree two. That is, after a real linear transformation it can be either $z_1^2$ or of the form $\z'^TM\z'$ such that $\z' = (z_1,\dots, z_{j+1})$ for some $1\le j\le n-1$ and $M$ is a square matrix of size $j+1$ with signature $(j,1)$. See \cite{garding-59}. This explains the initial forms in (1) and (2). 
	
Any term $\lambda z_j$ for some $1\le j \le n$, such that $z_j$ appears in our transformed initial forms, cancels out by one of the translations $z_j\mapsto z_j\pm \frac{\lambda}{2}$ without changing the initial form. Finally, we show that the number of linear terms in the rest of the variables is at most two. Consider the complex linear form $\sum_{j=1}^{n}\lambda_j z_j$. 
For $1\le j \le n$, let $\lambda_j= r_j + {\rm i} s_j$ such that $r_j,s_j\in\R$. We can now write the sum as 
$
(\sum_{j=1}^{n}r_j z_j)+{\rm i}(\sum_{j=1}^{n}s_j z_j)
$. If in the real part at least one of the $r_j$, say, $r_1$, is non-zero, then a
sequence of linear transformations $z_1\mapsto z_1-\frac{r_j}{r_1}z_j$ for $j=2,\dots,n$, cancels out $\sum_{j=2}^{n}r_j z_j$. Similarly, the complex part reduces to only one term. 
\end{proof}

We first focus on the case where $n=2$. In this case, we explicitly express the unbounded spectrahedral components forming $\I(p)^\compl$. The following subsection covers part of the proof of Theorem \ref{th:complex-classification1}.


\subsection{Complex conics with hyperbolic initial form}\label{subs:Conic-Hyp-Init}
To match them with our classification of conics in Theorem~\ref{th:conic-classification1}, we do a real linear transformation in the case (2) and write them as
\[
\begin{matrix}
\text{(1a.1)} \,p=z_1^2+\gamma,&&&&\text{(1a.2)}\,p=z_1^2+\gamma z_2\,\,\,\,\,\,\gamma\neq 0,&&&&\text{(1b)} \,p=z_1z_2+\gamma,
\end{matrix}
\]
\noindent for some $\gamma\in\C$. 
To find $\I(p)$ for each normal form, we compute the resultant of the two real polynomials, as introduced in (\ref{PolySystem}), with respect to $x_i$ to have a univariate polynomial in $x_j$, where $i,j\in\{1,2\}$, and $i\neq j$. Then we use the discriminantal conditions on the univariate polynomials to argue about the real roots.

First consider the normal form (1a.1). If $\gamma_{\mathrm{im}}=0$, then we have the real conics of the cases $(vi)$ and $(viii)$ in Theorem \ref{th:RealConicChar}.
The two real polynomials  $
p_{\mathrm{re}} = x_1^2-y_1^2+\gamma_{\mathrm{re}} =0
\; \text{ and } \;
p_{\mathrm{im}} = 2 x_1 y_1+\gamma_{\mathrm{im}} =0
$ form the system (\ref{PolySystem}) here.
From $\gamma_{\mathrm{im}}\neq 0$, we need to have $y_1\neq 0$. Now substituting $x_1 = \frac{-\gamma_{\mathrm{im}}}{2 y_1}$ from $p_{\mathrm{im}}=0$ into $p_{\mathrm{re}}=0$ and solving for $y_1^2$ implies
$
y_1^2 = \frac{1}{2}\left(\gamma_{\mathrm{re}}+\sqrt{\gamma_{\mathrm{re}}^2+\gamma_{\mathrm{im}}^2}\right).
$
Therefore,\vspace{-2mm}
\begin{equation}
\tag{\text{1a.1}}
\mathcal{I}(p) = 
\begin{cases}
\text{A unique line} &\text{if } \gamma\in\R_{\le 0},\\
\text{Two parallel lines} &\text{otherwise}.
\end{cases}
\end{equation}

Clearly, the closures of the components in the complement are spectrahedra.
\medskip

Now consider (1a.2) which is a generalization of the parabola case $(iii)$ in Theorem~\ref{th:RealConicChar}, where $\gamma=1$. 
 Similarly to the previous case, we build the corresponding polynomial system as (\ref{PolySystem}). The discriminantal condition after substituting $x_2$ from $p_{\mathrm{im}} = 0$ into $p_{\mathrm{re}} = 0$ implies that there exists a real $x_1$
if and only if ${4|\gamma|^2(y_1^2+\gamma_{\mathrm{im}}y_2)\ge 0}$.
Hence, $\I(p)^\compl$ consists of $\y\in\R^2$ such that
$y_1^2+\gamma_{\mathrm{im}}y_2< 0$.
This inequality specifies the open subset of $\R^2$ bounded by the parabola $y_1^2+\gamma_{\mathrm{im}}y_2= 0$ and containing its focus.
Therefore,
\begin{equation}
\tag{\text{1a.2}}
\mathcal{I}(p) = 
\begin{cases}
\R^2\setminus \{(0,y_2):y_2\neq 0\} &\text{if } \gamma\in\R, \\
\{\y \in\R^2 : y_1^2+\gamma_{\mathrm{im}}y_2\ge 0\}&\text{otherwise.}
\end{cases}
\end{equation}

Notice that this incidence of $\I(p)^\compl$ consisting of one unbounded component does not occur for real conics. See Corollary \ref{cor:oneUnbdd}. Further, $\I(p)^\compl$ for $\gamma\notin\R$ is given by the unbounded spectrahedral set defined by 
\begin{equation*}
\left(\begin{matrix}
1 & y_1 \\
y_1 & -\gamma_{\mathrm{im}}y_2
\end{matrix}\right)\succ 0.
\end{equation*}

For the last case (1b) from the corresponding real polynomial system $p_{\mathrm{re}} =p_{\mathrm{im}}=0$, one can simply check that $\gamma=0$ implies $\mathcal{I}(p)=\{\y\in\R^2:y_1y_2=0\}$.
Now let $\gamma\neq 0$ and first assume $y_1y_2\neq 0$.
After the substitution of $x_2$ from $p_{\mathrm{im}}=0$ to $p_{\mathrm{re}}=0$, the discriminantal condition on the quadratic univariate polynomial to have a real $x_1$ implies
\[
\gamma_{\mathrm{re}} - |\gamma| \le 2 y_1 y_2 \le  \gamma_{\mathrm{re}}+ |\gamma|.
\]

If $\gamma\in\R\setminus\{0\}$, then $\mathbf{0}$ is the only point with $y_1y_2=0$ that is included in $\I(p)$. If $\gamma\notin\R$, then the union of the two axes except the origin is included in $\I(p)$. Thus,
\begin{equation}
\tag{\text{1b}}
\I(p)=
\begin{cases}
\text{The union of the two axes $y_1$ and $y_2$} & \text{if } \gamma = 0, \vspace{2mm}\\\vspace{2mm}
\left\{\y \in \R^2 \ : \ 0 < y_1 y_2 \le \gamma \right\}
\cup \{\mathbf{0}\}  & \text{if } \gamma \in \R_{>0}, \\
\vspace{2mm}
\left\{\y \in \R^2 \ : \ \gamma \le y_1 y_2 < 0 \right\}
\cup \{\mathbf{0}\} & \text{if } \gamma \in \R\setminus\R_{\ge0}, \\
\left\{\y \in \R^2 \ : \
\frac{1}{2}(\gamma_{\mathrm{re}} - |\gamma|) \le y_1 y_2 \le \frac{1}{2}( \gamma_{\mathrm{re}} + |\gamma| ) \right\}
\setminus \{\mathbf{0}\}  & \text{if } \gamma \not\in \R.   
\end{cases}
\end{equation}

\begin{corollary}\label{co:spectra-double-real-root}
	Let $p\in\C[z_1,z_2]$ be a complex conic with hyperbolic initial form.  The complement $\I(p)^\compl$
	of the imaginary projection consists of only unbounded spectrahedral components. 
\end{corollary}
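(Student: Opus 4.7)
The plan is to apply Lemma \ref{le:hyp-Init-quadratic-forms} to reduce to the three normal forms (1a.1), (1a.2) and (1b) listed at the beginning of Subsection \ref{subs:Conic-Hyp-Init}, and then to read off $\overline{\I(p)}^\compl$ from the explicit descriptions already computed there. Before the case analysis, I would observe that both unboundedness of components and the class of spectrahedral sets are preserved under the $G_2$-action on $\R^2$: an invertible real linear substitution pulls back a strict LMI $A_0 + \sum_i y_i A_i \succ 0$ to another strict LMI (with the coefficient matrices replaced by real linear combinations), while an imaginary translation shifts $\I(p)$ by Lemma \ref{le:group-actions-improj} and hence preserves both properties. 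Therefore it suffices to verify the claim on each of the three normal forms.

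In case (1a.1), $\I(p)$ is either a single line or a pair of parallel lines, so the components of $\overline{\I(p)}^\compl$ are either two open half-planes or an open strip together with two open half-planes; each is unbounded and described by a $1\times 1$ strict LMI. In case (1a.2), if $\gamma\in\R$ then $\overline{\I(p)}=\R^2$ and the claim is vacuous, while if $\gamma\notin\R$ the complement is the open interior of a parabola, which is unbounded, and the $2\times 2$ strict LMI already written out in the description of (1a.2) presents it as a spectrahedral set.

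The main work lies in case (1b). For $\gamma=0$ the four open quadrants are the components, each unbounded and spectrahedral via $\mathrm{diag}(\pm y_1,\pm y_2)\succ 0$. For $\gamma\neq 0$, $\overline{\I(p)}$ is the closed region between two branches of the hyperbolas $y_1y_2 = c_\pm$ with $c_- < c_+$ as given in the case description, and $\overline{\I(p)}^\compl$ splits into four unbounded open connected components, one per coordinate quadrant. Those coinciding with a full open quadrant (occurring when $c_-=0$ or $c_+=0$) are handled exactly as in the $\gamma=0$ case; the remaining \emph{beyond-the-hyperbola} components I would realize by $2\times 2$ LMIs of the form
\[
\begin{pmatrix} \pm y_1 & \sqrt{|c|} \\ \sqrt{|c|} & \pm y_2 \end{pmatrix} \succ 0,
\]
with the signs chosen according to the quadrant: positivity of the $2\times 2$ determinant $\pm y_1 y_2 - |c|$ recovers the hyperbolic inequality while positivity of the diagonal entries selects the correct branch, and unboundedness is immediate since the coordinate axes are asymptotes. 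The whole argument amounts to bookkeeping over the descriptions in Subsection \ref{subs:Conic-Hyp-Init}; the only minor obstacle is matching the sign patterns of the LMIs with their respective connected components in case (1b).
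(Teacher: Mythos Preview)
Your approach is essentially the same as the paper's: reduce to the three normal forms via the $G_2$-action, dispatch (1a.1) and (1a.2) by inspection, and then exhibit explicit $2\times 2$ LMIs for the four components in case~(1b). Your off-diagonal entries $\sqrt{|c_\pm|}$ coincide exactly with the paper's choice of $w=\sqrt{\tfrac12(|\gamma|+\gamma_{\mathrm{re}})}$ and $u=\sqrt{\tfrac12(|\gamma|-\gamma_{\mathrm{re}})}$, and the sign bookkeeping you describe is precisely what the paper does with its matrices $S_1,\dots,S_4$. Your added remark that unboundedness and the spectrahedral property are preserved under $G_2$ is a useful explicit step that the paper leaves implicit.

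There is one small oversight. In case~(1a.2) with $\gamma\in\R$ you declare the claim vacuous because $\overline{\I(p)}=\R^2$, but the corollary concerns $\I(p)^\compl$, not $\overline{\I(p)}^\compl$. Here $\I(p)^\compl=\{(0,y_2):y_2\neq 0\}$ consists of two unbounded open half-lines on the $y_2$-axis; their closures are closed rays, which are spectrahedra (for instance $\{y_1=0,\ y_2\ge 0\}$ is cut out by $\mathrm{diag}(y_1,-y_1,y_2)\succeq 0$). This is trivial to patch and does not affect the rest of your argument; the paper itself treats this subcase only implicitly.
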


\begin{proof}
	We saw this already for the cases (1a.1) and (1a.2). Therefore, we only prove the statement for (1b). There are four
	unbounded components, namely in each quadrant one, and
	no bounded component in $\mathcal{I}(p)^\compl$.
	The closures of the four unbounded components after setting 
	\[w = \sqrt{\frac{1}{2}( |\gamma|+\gamma_{\mathrm{re}})}\,\quad
	\text{and}\quad u = \sqrt{\frac{1}{2}( |\gamma| - \gamma_{\mathrm{re}})}\,\] 
	have
	the following representations as spectrahedra.
	In the quadrants $y_1y_2\geq 0$, they are expressed by ${y_1 y_2 - \frac{1}{2}(\gamma_{\mathrm{re}} + |\gamma|) \ge 0}$, or equivalently, $S_1(y_1,y_2)\succeq 0$ and ${ S_2(y_1,y_2)\succeq 0}$, where 
	\[
	S_1(y_1,y_2)=\left( \begin{array}{ccc}
	y_1 && w \\
	w & &y_2\\
	\end{array} \right),
	\quad
	S_2(y_1,y_2)=\left( \begin{array}{cc}
	-y_1 & w \\
	w & -y_2 \\
	
	\end{array} \right).
	\]
	
	In the quadrants with $y_1y_2\leq 0$, they are expressed by ${y_1 y_2 - \frac{1}{2} (\gamma_{\mathrm{re}} - |\gamma|) \le 0}$, or equivalently, $S_3(y_1,y_2)\succeq 0$ and $S_4(y_1,y_2)\succeq 0$, where
	
	\[
	S_3(y_1,y_2)=\left( \begin{array}{cc}
	y_1 & u \\
	u & - y_2 \\
	\end{array} \right),
	\quad 
	S_4(y_1,y_2)=\left( \begin{array}{ccc}
	- y_1 && u \\
	u & &y_2  
	\end{array} \right).\vspace{-6mm}	\]  \end{proof}

Given a conic $q$, an explicit description of the components of $\I(q)^\compl$ can be derived by using those of its normal form $p$ and applying on $\y$ the inverse operations turning $q$ to $p$.
We close this subsection by providing two examples for the cases (1a.2) and (1b) and their corresponding spectrahedral components.

\begin{example}\label{ex:(1a.2)}
	Let $q(z_1,z_2)=z_1^2+2z_1z_2+z_2^2+2{\rm i}z_2+1$. By applying the transformation $A$ and the translation $\w$ given by
	\[
	A:=\begin{pmatrix}
	1 & -1 \\
	0 & 1
	\end{pmatrix} \quad \text{and} \quad \w:=\begin{pmatrix}
	0 \\ \rm{i/2}
	\end{pmatrix},
	\]
	the conic $q$ is transformed to its normal form $p=z_1^2+2{\rm i}z_2$. Thus, we have
	{\small
		\[\I(p)^\compl=\left\{ y \in\R^2:\begin{pmatrix}
		1 & y_1 \\
		y_1 & -2y_2
		\end{pmatrix}\succ 0\right\}
		\ \text{and} \ \
		\mathcal{I}(q)^\compl=\left\{ y \in\R^2:\begin{pmatrix}
		1 & y_1+y_2 \\
		y_1+y_2 & -2y_2+1
		\end{pmatrix}\succ 0\right\},
		\] 	} \\
	such that $\I(q)^\compl$ is obtained by the inverse transformations for $\y$ in $\I(p)^\compl$.
	Figure \ref{fig:improjComplex} (1a) illustrates $\I(q)^\compl$.\vspace{-1mm}
\end{example}
\begin{example}\label{ex:1b}
	Let $q(z_1,z_2)=z_1^2-z_2^2+2{\rm i}$.  Applying $A=\frac{1}{2}\left(\begin{matrix}
	-1 & -1 \\
	-1 & 1
	\end{matrix}\right)$ transfers the conic $q$ into $p = z_1z_2+2{\rm i}=0$. The value of both $u$ and $w$ introduced in the proof of Corollary \ref{co:spectra-double-real-root} is 1. By applying $A^{-1}$ to $\y$, the matrices $S_1,\ldots,S_4$ transform to 
	{\small
		\begin{align*}
		T_1(y_1,y_2)=& \begin{pmatrix}
		-y_1-y_2 & 1 \\
		1 & -y_1+y_2 
		\end{pmatrix}, \ \qquad T_2(y_1,y_2)= \ \left(\begin{matrix}
		y_1+y_2 && 1 \\
		1 && y_1-y_2 
		\end{matrix}\right), \\ T_3(y_1,y_2)=& \ \begin{pmatrix}
		-y_1-y_2 &1 &\!\!\\
		1& y_1-y_2&\!\!
		\end{pmatrix}, \ \qquad T_4(y_1,y_2)= \ \begin{pmatrix}
		y_1+y_2 &1\, \\
		1& -y_1+y_2\,
		\end{pmatrix}.
		\end{align*} }	
	Thus, the complement of the imaginary projection as shown in Figure~\ref{fig:dist-real} is given by \[
	\overline{\I(q)^\compl}=\bigcup_{j=1}^4\left\{ \y \in\R^2:T_j(y_1,y_2)\succeq 0\right\}.
	\]\vspace{-4mm}
	\begin{figure}[H]
		\begin{subfigure}{.18\textwidth}
			\centering
			\includegraphics[width=1\linewidth]{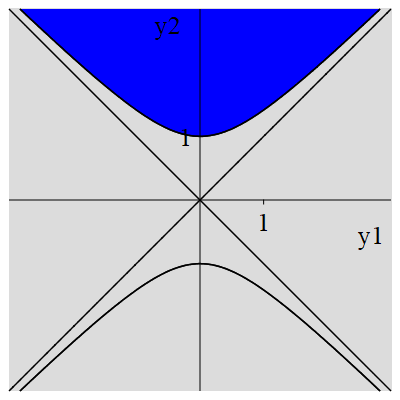}
		\end{subfigure}%
		\begin{subfigure}{.18\textwidth}
			\centering
			\includegraphics[width=1\linewidth]{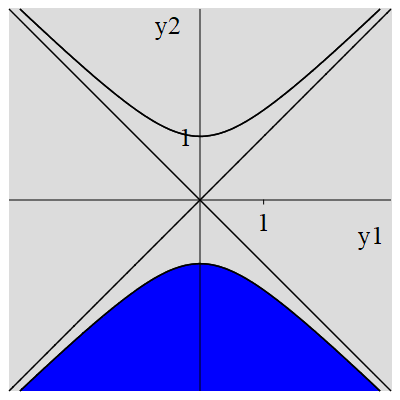}
		\end{subfigure} 
		\begin{subfigure}{.18\textwidth}
			\centering
			\includegraphics[width=1\linewidth]{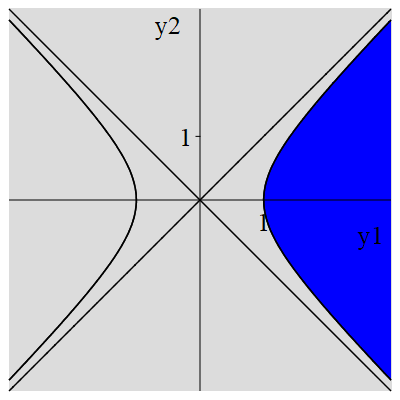}
		\end{subfigure}%
		\begin{subfigure}{.18\textwidth}
			\centering
			\includegraphics[width=1\linewidth]{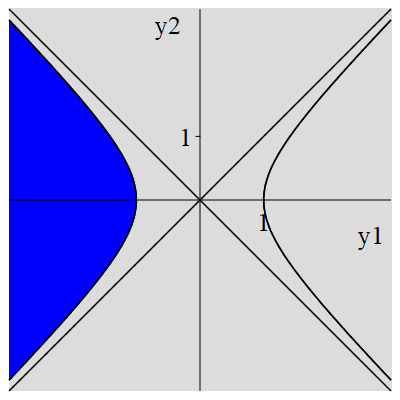}
		\end{subfigure} 
		\begin{subfigure}{.18\textwidth}
			\centering
			\includegraphics[width=1\linewidth]{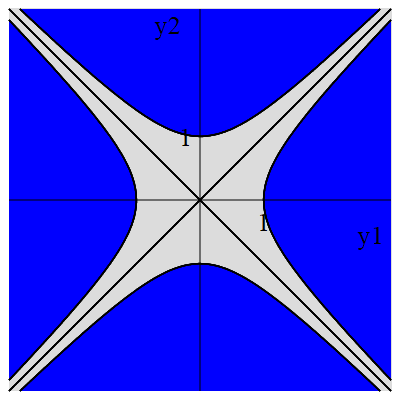}
		\end{subfigure}%
		\caption{The first four pictures represent $T_j(y_1,y_2)\succeq 0$ for $1\le j\le4$,  and the last one shows their union, which gives $\I(q)^\compl$ for $q =z_1^2-z_2^2+2{\rm i}$.}
		\label{fig:dist-real}
	\end{figure}
\end{example}

In the example above all four components are strictly convex, which can not occur in the case of real conics. This provides a key ingredient in the proof of Theorem \ref{th:StrictlyConvexComplex}.
\subsection{Higher dimensional complex quadratics}
We now let the dimension to be at least three and we use the normal forms provided in Lemma \ref{le:hyp-Init-quadratic-forms} to show the following classification of the imaginary projection. 
To avoid redundancy, for each quadratic polynomial we set $n$ to be the largest index of $z$ appearing in its normal form. Since we have already covered the case of conics, we need to consider $n\ge 3$.

\begin{theorem}
\label{th:ndimhyperbol1}
	Let $n \ge 3$ and $p\in\C[z_1,\dots,z_n]$ be a quadratic polynomial with hyperbolic initial form. Up to the action of $G_n$, the imaginary projection $\I(p)$ is either $\R^n$,  $\R^n\setminus\{(0,\dots,0,y_n)\in\R^n : y_n\neq 0\}$, or otherwise we can write $p$ as $p= \sum_{i=1}^{n-1}z_i^2 - z_{n}^2+\gamma$ for some $\gamma\in\C$ such that $|\gamma|=1$ and we get	
	\begin{equation*}
	\mathcal{I}(p)=
	\begin{cases}
	\left\{\y \in \R^n \ : \ y_n^2<\sum_{i=1}^{n-1} y_i^2 \right\}
	\cup \{\mathbf{0}\}  & \text{if } \gamma =1,\vspace{1mm} \\
	\vspace{1mm}
	\left\{\y \in \R^n \ : \ y_{n}^{2}-\sum_{i=1}^{n-1} y_i^2 \le 1 \right\}
	& \text{if } \gamma =-1, \\
	\left\{\y \in \R^n \ : \
	y_{n}^{2}-\sum_{i=1}^{n-1} y_i^2 \le\frac{1}{2}(1-\gamma_{\rm re}) \right\}
	\setminus \{\mathbf{0}\}  & \text{if } \gamma \not\in \R.  
	\end{cases}
	\end{equation*}
\end{theorem}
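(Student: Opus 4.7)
The plan is to apply Lemma~\ref{le:hyp-Init-quadratic-forms} to bring $p$, up to the action of $G_n$, into one of the two normal forms listed there. Since $n\geq 3$ denotes the largest index of a variable appearing in the resulting normal form, the possibilities split into: (a) normal form (1), which forces $n=3$ with the coefficient $r$ of $z_3$ nonzero; (b) normal form (2) with $j<n-1$, i.e., with at least one of the linear coefficients $\alpha$ or $r$ nonzero and indexed within range; or (c) normal form (2) with $j=n-1$, giving $p = \sum_{i=1}^{n-1} z_i^2 - z_n^2 + \gamma$. Case (c) is the main case that produces the explicit formulas; cases (a) and (b) will be shown to fall into the first two listed possibilities for $\I(p)$.

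For cases (a) and (b), I would separate real and imaginary parts as in~\eqref{PolySystem} and treat the two equations as a linear subsystem in the $x$-coordinates attached to the linear terms of $p$. When the complex coefficients of these linear terms span $\C$ as an $\R$-vector space, this subsystem has a unique real solution for every choice of the remaining $x$-coordinates and every $y\in\R^n$, so $\I(p) = \R^n$. Otherwise the coefficients are $\R$-parallel and a real linear change of variables reduces $p$ to a polynomial with a single effective linear term; a discriminant analysis analogous to the one in Subsection~\ref{subs:Conic-Hyp-Init} then places $\I(p)$ into the second listed form, producing the excluded set $\{(0,\ldots,0,y_n): y_n\neq 0\}$ when, after reduction, the surviving linear term is aligned with $z_n$.

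For case (c), a complex scalar multiple of $p$ together with a positive real scaling of the variables normalizes $|\gamma|=1$. Writing $z_j = x_j + \mathrm{i}y_j$ and setting $A(y) = \sum_{i=1}^{n-1} y_i^2 - y_n^2$, the system~\eqref{PolySystem} takes the form
\[
\sum_{i=1}^{n-1} x_i^2 - x_n^2 = A(y) - \gamma_{\mathrm{re}}, \qquad
\sum_{i=1}^{n-1} x_i y_i - x_n y_n = -\gamma_{\mathrm{im}}/2.
\]
Hence $y\in\I(p)$ if and only if the affine hyperplane with normal $(y_1,\ldots,y_{n-1},-y_n)$ and offset $-\gamma_{\mathrm{im}}/2$ meets the Lorentzian quadric $Q(x)=A(y)-\gamma_{\mathrm{re}}$ in $\R^n$. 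The three subcases $\gamma=1$, $\gamma=-1$, and $\gamma\notin\R$ are then settled by comparing the Lorentzian character of the hyperplane's normal (governed by the sign of $A(y)$) with the type of the quadric (one-sheet, cone, or two-sheet, according to the sign of $A(y)-\gamma_{\mathrm{re}}$), supplemented by special treatment of $y=0$.

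The hard part will be the subcase $\gamma\notin\R$. Here $\gamma_{\mathrm{im}}\neq 0$ pushes the hyperplane off the origin, and a careful discriminantal analysis paralleling case~(1b) in Subsection~\ref{subs:Conic-Hyp-Init} yields the threshold $y_n^2-\sum_{i=1}^{n-1} y_i^2\leq \tfrac12(1-\gamma_{\mathrm{re}})$. The exclusion of the origin in this case is immediate since $y=0$ makes the linear equation read $0=-\gamma_{\mathrm{im}}/2\neq 0$, contradicting solvability.
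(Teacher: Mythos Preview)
Your overall strategy matches the paper's: reduce via Lemma~\ref{le:hyp-Init-quadratic-forms} and then split according to which linear terms survive. For the main case $p=\sum_{i=1}^{n-1}z_i^2-z_n^2+\gamma$ you frame the system as an affine hyperplane meeting a Lorentzian quadric, whereas the paper uses the $O(n-1)$-invariance of the pair $(p_{\rm re},p_{\rm im})$ under simultaneous rotation of $(x_1,\dots,x_{n-1})$ and $(y_1,\dots,y_{n-1})$ to reduce to $y_2=\cdots=y_{n-1}=0$ and then carries out an explicit two-variable discriminant computation. Both routes are valid; the paper's is more hands-on, yours more conceptual, but in the $\gamma\notin\R$ subcase you would still end up computing essentially the same quadratic discriminant to extract the threshold $\tfrac12(1-\gamma_{\rm re})$.

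There is, however, a genuine gap in your case~(b). After reducing to a single effective linear term, you assert that a discriminant analysis ``places $\I(p)$ into the second listed form.'' This is correct only when the surviving coefficient is \emph{real}. If it is non-real---i.e.\ $p=\sum_{i=1}^{j}z_i^2-z_{j+1}^2+\alpha z_{j+2}$ with $\alpha\notin\R$ and $j\ge 1$---then in fact $\I(p)=\R^n$, not the punctured-axis set. The analogy with case~(1a.2) of Subsection~\ref{subs:Conic-Hyp-Init} breaks down precisely because there the initial form $z_1^2$ is definite, whereas here the Lorentzian initial form is indefinite: solving $p_{\rm im}=0$ for $x_{j+2}$ (possible since $\alpha_{\rm im}\neq 0$) and substituting into $p_{\rm re}=0$ yields an equation of the shape $\sum_{i=1}^{j}(x_i-r_i)^2-(x_{j+1}-r_{j+1})^2=r$, which has real solutions for every $r\in\R$. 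You must separate the real- and non-real-coefficient subcases explicitly; the paper does this as its cases~(b) and~(c).
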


\begin{proof}
By real scaling and complex translations, any of the forms in Lemma \ref{le:hyp-Init-quadratic-forms} drops into one of the following cases:
\[
\begin{matrix}
\text{(a)}\, \alpha=r=\gamma=0,&&&
\text{(b)}\, \alpha=1,\,\, \text{and}\,\,  r=\gamma=0,&&&
\text{(c)}\,\alpha\notin\R,\,\, \text{and} \,\,r,\gamma=0,
\end{matrix}
\]\vspace{-6mm}\[
\begin{matrix}
\text{(d)}\, \alpha\notin\R,\, r=1,\,\, \text{and} \,\,\gamma=0,&&&
\text{(e)}\,\alpha=r=0, \,\,\text{and} \,\,\gamma\neq 0.	
\end{matrix}
\]

For the normal form (1) all cases but (d) drop into the conic sections discussed previously. Case (d) is similar for both normal forms (1) and (2). Thus we focus on (2). 

The imaginary projection for the cases (a) and (b) are known from the real classification and they are $\R^n$ and $ \R^n\setminus\{(0,\dots,0,y_n)\in\R^n : y_n\neq 0\}$, respectively. See \cite[Theorem 5.4]{jtw-2019}.

In case (c) after building the system (\ref{PolySystem}) and considering two cases, based on whether the real part of $\alpha$ is zero or not, one can then check that $\mathcal{I}(p) = \R^n$ as follows. We have
\[
\begin{array}{rcl}
p_{\rm re} &=&\sum_{i=1}^{n-2}x_i^2-x_{n-1}^{2}-\sum_{i=1}^{n-2}y_i^2+y_{n-1}^{2}+\alpha_{\rm re}x_n-\alpha_{\rm im}y_n,\\
p_{\rm im} &=&2\sum_{i=1}^{n-2}x_iy_i-2 x_{n-1} y_{n-1}+\alpha_{\rm im} x_{n}+\alpha_{\rm re} y_{n}.
\end{array}
\]

First assume $\alpha_{\rm re} = 0$. For any $\y\in\R^n$, the equation $p_{\rm re} = 0$ has solutions $(x_1,\dots,x_{n-1})\in\R^{n-1}$. By substituting any of those solutions in $p_{\rm im} = 0$ we can solve it for $x_n$ and get a real solution. Now let $\alpha_{\rm re}\neq 0$. In this case, we substitute $x_n$ from the second equation into the first. For any $\y\in\R^n$, we get $\sum_{i=1}^{n-2}(x_i-r_i)^2-(x_{n-1}-r_{n-1})^{2} = r_n$ for some $r_1,\dots,r_{n}\in\R$ and therefore, there always exists a real solution $(x_1,\dots,x_{n-1})\in\R^{n-1}$.

Similarly, in the case (d), for any $\y\in\R^n$, there exists a real solution $(x_1,\dots,x_{n-1})\in\R^{n-1}$ for $p_{\rm im} = 0$ and for any $\y\in\R^n$ and any $(x_1,\dots,x_{n-1})\in\R^{n-1}$, there exists a real $x_n$ for $p_{\rm re}=0$. Thus $\I(p) = \R^n$ in this case, too.

Now we focus on case (e).
Let $p= \sum_{i=1}^{n-1}z_i^2 - z_{n}^2+\gamma$ for some $\gamma\in\C \setminus \{0\}$. Building the real system (\ref{PolySystem}) for $p$ yields
\[
\begin{matrix}
p_{\rm re} &=&\sum_{i=1}^{n-1}x_i^2-x_{n}^{2}-\sum_{i=1}^{n-1}y_i^2+y_{n}^{2}+\gamma_{\rm re},&&
p_{\rm im} &=&2\sum_{i=1}^{n-1}x_iy_i-2 x_{n} y_{n}+\gamma_{\rm im}.
\end{matrix}
\]

We can  assume $|\gamma| = 1$. Note that $\{\mathbf{0}\}\in\mathcal{I}(p)$ if and only if $\gamma\in\R$. We can thus exclude the origin in the following calculations.  Moreover, Theorem \ref{th:real-Quad} shows the cases where 
$\gamma = \pm 1$. Thus, we need to consider the case $\gamma\notin\R$.
\medskip

Let $T$ be an orthogonal transformation on $\R^{n-1}$. Invariance of the polynomials $\sum_{j=1}^{n-1}{y}_j^2$ and $\sum_{j=1}^{n-1}x_jy_j$ under 
the mapping $(x,y) \mapsto (T(x),T(y))$ implies
\begin{center}
	$(y_1,y_2,\dots,y_{n}) \in \mathcal{I}(p)\qquad$ if and only if $\qquad (y'_1,\dots,y'_{n-1},y_n) \in \mathcal{I}(p)$,
\end{center}
where $(y'_1,\dots,y'_{n-1}) = T(y_1,\dots,y_{n-1})$.
For a given $\y\in\I(p)$, let $T$ be a transformation with the property
$T(y_1,\dots,y_{n-1})=(\sqrt{\sum_{i=1}^{n-1}y_i^2},0,\dots, 0)$ and set $(x'_1,\dots,x'_{n-1})=T(x_1,\dots,x_{n-1})$.
We can now rewrite the simplified polynomial system as
\[
\begin{matrix}
p_{\rm re} &=&\sum_{i=1}^{n-1}{x'_{i}}^{2}-x_{n}^{2}-{y'_1}^{2}+y_{n}^{2}+\gamma_{\rm re}, &&&
p_{\rm im} &=& 2 x'_{1} y'_{1}-2 x_{n} y_{n}+\gamma_{\rm im}.
\end{matrix}
\]

First consider $y'_1 = 0$. This implies $y_n\neq 0$. Solving $p_{\rm im} =0$ for $x_n$ and substituting in $p_{\rm re} = 0$ implies
\[
4y_{n}^2(\sum_{i=1}^{n-1}{x'_i}^{2})=\left(\gamma_{\rm re}^{2}+\gamma_{\rm im}^{2}\right)-\left(2y_{n}^{2}+\gamma_{\rm re}^{}\right)^{2} = 1-\left(2y_{n}^{2}+\gamma_{\rm re}\right)^{2}.
\]

This has a real solution for $(x'_1,\dots,x'_{n-1})$ if and only if $y_n^2\le\frac{1-\gamma_{\rm re}}{2}$. Now assume $y'_1 \neq 0$. Observe that if ${y'_1}^2 = y_n^2$ then we always get a real solution. Thus assume $\frac{y_{n}^{2}}{{y'_1}^2}-1\neq 0$.
Solving $p_{\rm im} =0$ for $x'_1$ and substituting in $p_{\rm re} = 0$ implies 
{\small\[
\left(\frac{y_{n}^{2}}{{y'_1}^2}-1\right) \left(x_{n}-\frac{\gamma_{\rm im} y_{n}}{2 {y'_1}^2 \left(\frac{y_{n}^{2}}{{y'_1}^2}-1\right)}\right)^{2}+\sum_{i=2}^{n-1}{x'_i}^{2}+ \frac{\left(y_{n}^{2}-{y'_1}^2\right)^{2}+\gamma_{\rm re}\left(y_{n}^{2}-{y'_1}^2\right)-\left(\frac{\gamma_{\rm im}^{}}{2}\right)^{2}}{ y_{n}^{2}- {y'_1}^2}=0.
\]}

If  ${y'_1}^2 > y_n^2$, there always is a real solution and otherwise, it has a real solution if and only if $\left(y_{n}^{2}-{y'_1}^{2}\right)^{2}+\gamma_{\rm re}\left(y_{n}^{2}-{y'_1}^{2}\right)-\left(\frac{\gamma_{\rm im}^{}}{2}\right)^{2} \le 0$. That is, $y_{n}^{2}-{y'_1}^{2}\le\frac{1-\gamma_{\rm re}^{}}{2}$.
To get the imaginary projection of the original system, it is enough to do the inverse transformation $T^{-1}$. This completes the proof.
\end{proof}

\begin{cor}
  \label{co:ndimhyperbol2} 
	Let $p\in\C[z_1,\dots,z_n]$ be a quadratic polynomial with hyperbolic initial form. Then
	\begin{itemize}
		\item[(1)] the complement $\mathcal{I}(p)^\compl$ is either empty or it consists of
		\subitem- one, two, three, or four unbounded components; or
		\subitem- two unbounded components and a single point.
		\item[(2)] the complement of the closure $\overline{\mathcal{I}(p)}^\compl$ is either empty or unbounded.
		\item[(3)] the algebraic degrees of the irreducible components in $\partial\I(p)$ are at most two. 
	\end{itemize}
\end{cor}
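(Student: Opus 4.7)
The plan is to combine Lemma~\ref{le:hyp-Init-quadratic-forms} with the explicit descriptions of $\I(p)$ already obtained in Subsection~\ref{subs:Conic-Hyp-Init} (for $n=2$) and in Theorem~\ref{th:ndimhyperbol1} (for $n\ge 3$). Since all three assertions are invariant under the action of $G_n$, it suffices to verify them separately for each of the finitely many normal forms, and the case $n=1$ reduces to the subcase $p=z_1^2+\gamma$ of Lemma~\ref{le:hyp-Init-quadratic-forms}.

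For statement (1) I would tabulate the complement in each normal form. In dimension two, $(\mathrm{1a.1})$ yields either a single line or two parallel lines, contributing two or three unbounded half-plane/strip components; $(\mathrm{1a.2})$ gives either the punctured $y_2$-axis (two unbounded rays) or the strict interior of a real parabola (one unbounded convex region); and $(\mathrm{1b})$ always produces four unbounded components, one in each open quadrant, as already recorded in Corollary~\ref{co:spectra-double-real-root}. For $n\ge 3$, Theorem~\ref{th:ndimhyperbol1} yields either the empty complement, the punctured $y_n$-axis (two unbounded rays), a closed double cone minus its vertex (two unbounded components), the exterior of a hyperboloid of two sheets (two unbounded components), or, when $\gamma\notin\R$, two open sheets of a hyperboloid together with the isolated origin $\{\mathbf 0\}$. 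These configurations exhaust the list asserted in (1). The only subtle point is checking that removing the vertex of the closed double cone $\{y_n^2\ge\sum_{i=1}^{n-1}y_i^2\}$ genuinely disconnects it into two unbounded components; this holds for $n\ge 2$.

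For statement (2) I would observe that $\overline{\I(p)}$ in each case is either $\R^n$, a finite union of affine hyperplanes, or a closed sublevel set of a nondegenerate real quadratic form of signature $(n-1,1)$ (up to additive constant). In every non-full-space instance, $\overline{\I(p)}^\compl$ is then defined by a strict linear or quadratic inequality that visibly extends to infinity in some direction, so $\overline{\I(p)}^\compl$ is either empty or unbounded. Statement (3) is now immediate: in every case $\partial\I(p)$ is contained in the zero set of a polynomial of degree at most two, namely a linear polynomial in the line, parallel-lines, and punctured-axis cases, or one of the quadratic polynomials $y_1^2+\gamma_{\rm im}y_2$, $y_1y_2-c$, or $y_n^2-\sum_{i=1}^{n-1}y_i^2-c$ in the remaining cases. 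Hence every irreducible component of $\partial\I(p)$ has algebraic degree at most two.

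The main obstacle I anticipate is bookkeeping: matching the normal forms of Lemma~\ref{le:hyp-Init-quadratic-forms} against the classes $(\mathrm{1a.1})$, $(\mathrm{1a.2})$, $(\mathrm{1b})$ for $n=2$ and against the five subcases of Theorem~\ref{th:ndimhyperbol1} for $n\ge 3$ without double-counting, and verifying in the borderline case $\gamma\notin\R$ that the single point $\{\mathbf 0\}$ really is isolated in $\I(p)^\compl$ and contributes an extra component rather than lying on the closure of the two hyperboloid sheets. Once the normal form has been fixed, the geometric content of each subcase is routine.
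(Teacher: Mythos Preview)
The paper gives no explicit proof of this corollary; it is meant to be read off from the explicit descriptions of $\I(p)$ in Subsection~\ref{subs:Conic-Hyp-Init} and Theorem~\ref{th:ndimhyperbol1}, and your case-by-case verification is exactly that intended argument.

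There is, however, a genuine gap in your treatment of case $(\mathrm{1b})$, inherited from Corollary~\ref{co:spectra-double-real-root}. For $\gamma\notin\R$ the formula in Subsection~\ref{subs:Conic-Hyp-Init} reads
\[
\I(p)=\Bigl\{\y:\tfrac12(\gamma_{\rm re}-|\gamma|)\le y_1y_2\le\tfrac12(\gamma_{\rm re}+|\gamma|)\Bigr\}\setminus\{\mathbf 0\},
\]
and since $\gamma_{\rm re}-|\gamma|<0<\gamma_{\rm re}+|\gamma|$, the origin lies strictly inside the band. Hence $\mathbf 0$ is an \emph{isolated} point of $\I(p)^\compl$, and the complement consists of four unbounded hyperbolic regions \emph{together with} the single point $\{\mathbf 0\}$. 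You correctly detect the analogous isolated origin for $n\ge3$ in the $\gamma\notin\R$ subcase of Theorem~\ref{th:ndimhyperbol1}; the difference is that for $n\ge3$ the set $\{y_n^2-\sum_{i<n}y_i^2>c\}$ with $c>0$ has only two components, whereas in the plane the two inequalities $y_1y_2>c_+$ and $y_1y_2<c_-$ each contribute two. So the configuration ``four unbounded components and a single point'' actually occurs, and neither of the two alternatives you list in part~(1) covers it. This oversight is already present in the paper's Corollary~\ref{co:spectra-double-real-root} (whose four spectrahedra $S_j\succeq0$ all miss the origin when $u,w>0$) and in the statement of the present corollary itself; your bookkeeping simply reproduces it.
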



		\section{The main classification of complex conics\label{se:mainclassification}}

		In this section, we give a classification of the imaginary projection $\mathcal{I}(p)$ where $p\in\C[\z] = \C[z_1,z_2]$ is a complex conic as in Definition \ref{def:complexConic}. We state our topological classification in terms of the number and boundedness of the components in  $\mathcal{I}(p)^\compl$.
		In particular, this implies that the number of bounded and unbounded components do not exceed one and four, respectively. Furthermore, $\mathcal{I}(p)^\compl$ cannot contain both bounded and unbounded components for some complex conic $p$. 
		
A main achievement of this section is to establish a suitable classification and normal forms of complex conics under the action of the group $G_2$.
There are infinitely many orbits on the set of complex conics under this action, since
the real dimension of $G_2$ is $8$ and the set of complex conics has real dimension $10$. Each of our normal forms corresponds to infinitely many orbits that share their topology of imaginary projection by Lemma \ref{le:group-actions-improj}.

As a consequence of the obstructions in the existing classifications of conics that we discussed in the Introduction, we developed our own classification of conic sections. It is based on the five distinct arrangement possibilities for the roots of the initial form in $\P^1$ that are grouped in two main cases, depending on whether the initial form of the complex conic is hyperbolic or not: 
	
	\vspace{-2mm}
	\setlength{\columnsep}{-10pt}
	\begin{multicols}{2}
		\begin{itemize}
			\item[] \hspace{-0.82cm}\underline{Hyperbolic initial form}\vspace{3mm}
		\item[] \vspace{-1mm}
			\item[(1a)]  A double real root\vspace{2mm}
			\item [(1b)]Two distinct real roots\vspace{2mm}
			\item[] \hspace{-0.85cm}\underline{Non-hyperbolic initial form}\vspace{3mm}
			\hspace{-3mm}		\item [(2a)]A double non-real root\vspace{2mm}
			\item [(2b)]One real and one non-real root\vspace{2mm}
			\item [(2c)]Two distinct non-real roots\vspace{2mm}
		\end{itemize}
	\end{multicols}\vspace{-1mm}

	\begin{thm}[\textbf{Topological Classification}]
		\label{th:complex-classification1}
		Let $p\in\C[z_1,z_2]$ be a complex conic. For the above five
		 cases, the set $\I(p)^\compl$ is
			\vspace{-4mm}
		\setlength{\columnsep}{-10pt}
		\begin{multicols}{2}
			\begin{itemize}
				\item[] \vspace{2mm}
				\item[(1a)]  the union of one, two, or three  
							\item[]unbounded components.\vspace{2mm}
				\item [(1b)] the union of four
					\item[]unbounded components.\vspace{2mm}
				\item[] \vspace{2mm}
				\item [(2a)] empty. \vspace{2mm}
				\item[(2b)] empty, a single point, 
				\item[]or a line segment.\vspace{2mm}
				\item [(2c)]empty or one bounded component, 
				\item[]possibly open.\vspace{2mm}
			\end{itemize}
		\end{multicols}\vspace{-2mm}	
		In particular, the components of  $\mathcal{I}(p)^\compl$ are spectrahedral in all
		the first four classes. This is not true in general for the last class {\rm(2c)}. 
	\end{thm}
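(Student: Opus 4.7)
The plan is to analyze each of the five root-arrangement classes separately, after first reducing to normal forms. By Lemma~\ref{le:group-actions-improj} the topology of $\I(p)^{\compl}$, as well as the spectrahedrality of its components, is invariant under the $G_2$-action, so it suffices to work up to this action. For any complex conic the initial form is a binary quadratic and thus has two roots (with multiplicity) in $\P^1_\C$. Complex translations do not alter the initial form, while real linear transformations act on $\P^1_\C$ preserving $\P^1_\R$ and commuting with complex conjugation; hence the five arrangement types (double real, two distinct real, double non-real, one real and one non-real, two distinct non-real) form a $G_2$-invariant partition. Within each class I would select a normal form by first putting the initial form in canonical shape via a real linear change of coordinates, and then eliminating as many lower-order monomials as possible via complex translations, producing finitely many parametrized families of representatives.

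For the hyperbolic-initial classes (1a) and (1b), the analysis is essentially in place: Lemma~\ref{le:hyp-Init-quadratic-forms} provides the normal forms (1a.1), (1a.2), (1b); the explicit computation in Subsection~\ref{subs:Conic-Hyp-Init} determines $\mathcal{I}(p)$; and Corollary~\ref{co:spectra-double-real-root} supplies an unbounded spectrahedral description of every component of $\mathcal{I}(p)^{\compl}$. The claimed counts can then be read off: in (1a), one, two, or three unbounded components depending on the subcase and the sign or reality of $\gamma$; in (1b), exactly four unbounded components.

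The three non-hyperbolic classes (2a), (2b), (2c) will be treated subsection by subsection in Section~\ref{se:non-hyperbolic}. The strategy in each case is the same: fix a normal form whose initial form realizes the prescribed root arrangement, write out the real polynomial system $p_{\rm re}=p_{\rm im}=0$ of~\eqref{PolySystem}, and for each $\mathbf{y}\in\R^2$ decide real solvability by resultants in one of $x_1,x_2$ together with discriminantal sign conditions in the style of Remark~\ref{re:quarticRoots} and Example~\ref{ex:caseB}. For (2a) a double non-real root should make the leading part of the system definite in the appropriate sense, forcing $\I(p)=\R^2$ and hence $\I(p)^{\compl}=\emptyset$. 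For (2b) the single real direction collapses the relevant discriminant to an expression at most quadratic in $\mathbf{y}$, whose sublevel set is empty, a single point, or a line segment; the complement is then either empty or a $2\times 2$ linear matrix inequality, yielding spectrahedrality.

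The main obstacle is class~(2c). When the two non-real roots are complex conjugates, a real linear change of coordinates brings the initial form to a positive-definite shape such as $z_1^2+z_2^2$, and the analysis mirrors the real ellipse, producing at most one bounded component with a $2\times 2$ spectrahedral description. When the two roots are \emph{not} conjugate, the resultant computation is already illustrated by the introductory example $p=z_1^2+{\rm i}z_2^2+z_2$, which yields an irreducible boundary polynomial of algebraic degree up to $8$ as in~\eqref{eq:example1}; deciding emptiness versus existence of a strictly convex bounded component will require a careful sign analysis of this degree-eight polynomial along the lines of Example~\ref{ex:caseB}. Finally, the failure of spectrahedrality in general has to be established by exhibiting a concrete non-spectrahedral instance within this subcase, typically by invoking algebraic-degree or rigid-convexity obstructions on determinantal representations of planar convex regions whose boundary is a smooth irreducible curve of degree at least four.
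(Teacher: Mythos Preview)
Your overall framework—reduction to normal forms via $G_2$, then case-by-case analysis of the real system~\eqref{PolySystem}—matches the paper, and your treatment of (1a), (1b) together with Corollary~\ref{co:spectra-double-real-root} is correct as stated. However, two of the non-hyperbolic sketches misidentify the mechanism. In (2a) the leading part is \emph{not} ``definite in the appropriate sense''; the paper's argument is that for every fixed $\y$ both $p_{\rm re}=0$ and $p_{\rm im}=0$ are hyperbolas in $\x$ (with quadratic parts $x_1^2-x_2^2$ and $-2x_1x_2$) whose asymptote directions are transverse, so they always meet in $\R^2$. In (2b) the complement is not read off from a discriminant quadratic in $\y$; rather, for $y_2\neq 0$ the substitution yields a real \emph{cubic} in $x_2$, which always has a real root, so the entire complement is confined to the line $y_2=0$ and is then determined by a univariate quadratic there.

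The substantive gap is in the non-conjugate subcase of (2c). Your plan is to push the degree-$8$ discriminant analysis of Example~\ref{ex:caseB} through in general, but a sign analysis of that polynomial does not by itself deliver the two topological facts the theorem asserts: that $\I(p)^{\compl}$ has \emph{at most one} bounded component and \emph{no} unbounded component. The paper bypasses the degree-$8$ polynomial entirely for this step (Lemma~\ref{OnebddComp}): solving $p=0$ for $z_1$ as a function of $z_2\in\R$ produces two continuous branches $y_1^{(1)}(z_2),\,y_1^{(2)}(z_2)$, so any line $\{y_2=0\}$ meets $\I(p)$ in at most two pieces, forcing at most one bounded complement component after a suitable $G_2$-move; and an asymptotic analysis of the same branches as $z_2\to\pm\infty$ shows that $y_1\to+\infty$ along at least one of them, which excludes any ray in $\I(p)^{\compl}$ and hence any unbounded component. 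The degree-$8$ computation appears only later, in the proof of Corollary~\ref{co:alg-degrees}, and is not used for the component count. Your proposed rigid-convexity obstruction for the non-spectrahedral instance is, on the other hand, exactly what the paper does.
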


The following corollary relates the boundedness of the components in $\mathcal{I}(p)^\compl$ to the hyperbolicity of the initial form $\init(p)$.

	\begin{cor}\label{co:hyperbolicityBdd}
		Let $p\in\C[z_1,z_2]$ be a complex conic. Then $\mathcal{I}(p)^\compl$ consists of  unbounded components if and only if the initial form of $p$ is hyperbolic. Otherwise, $\mathcal{I}(p)^\compl$ is empty or consists of one bounded component. 
		Moreover, if there is a bounded component with 
		non-empty interior, then $\init(p)$ has two distinct non-real roots.
	\end{cor}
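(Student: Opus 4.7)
The plan is to read the corollary off from the Main Classification Theorem (Theorem~\ref{th:complex-classification1}) by going through the five classes $(1a)$, $(1b)$, $(2a)$, $(2b)$, $(2c)$ and recording the topology of each possible complement component. Since each complex conic falls into exactly one of these classes and the classification is organized precisely by whether the initial form is hyperbolic or not, the equivalence in the first statement is essentially a bookkeeping consequence.

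First I would treat the hyperbolic direction. If $\init(p)$ is hyperbolic, then by construction $p$ lies in class $(1a)$ or $(1b)$; in both classes Theorem~\ref{th:complex-classification1} asserts that $\I(p)^{\compl}$ is a union of unbounded components (one to three in case $(1a)$ and four in case $(1b)$). Hence whenever $\I(p)^{\compl}$ is non-empty it consists entirely of unbounded components.

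Conversely, I would show that a non-hyperbolic initial form rules out unbounded components of $\I(p)^{\compl}$. In that situation $p$ belongs to $(2a)$, $(2b)$, or $(2c)$. The theorem states that the respective complements are empty, or a single point, or a line segment, or a single (possibly open) bounded component. In each of these cases every component of $\I(p)^{\compl}$, if it exists at all, is bounded. This yields the ``if and only if'' and simultaneously the second assertion that otherwise $\I(p)^{\compl}$ is either empty or a single bounded component.

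For the final ``moreover'' clause I would argue by elimination. A bounded component of $\I(p)^{\compl}$ with non-empty interior in $\R^2$ must be two-dimensional; a single point and a line segment both have empty interior in $\R^2$, so classes $(2a)$ and $(2b)$ are excluded, and classes $(1a)$, $(1b)$ produce no bounded components at all. The only remaining possibility is class $(2c)$, whose defining property is that $\init(p)$ has two distinct non-real roots in $\P^1$. The main obstacle is essentially nothing beyond making sure that ``single point'' and ``line segment'' are correctly identified as having empty interior in $\R^2$, which is immediate, so the corollary follows directly from Theorem~\ref{th:complex-classification1}.
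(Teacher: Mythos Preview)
Your proposal is correct and is exactly the approach the paper intends: the corollary is stated immediately after Theorem~\ref{th:complex-classification1} without a separate proof, since it is a direct reading of the five cases of that theorem. Your case-by-case bookkeeping, including the observation that the $(2a)$ and $(2b)$ complements have empty interior, matches the paper's implicit argument.
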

	Figure~\ref{fig:improjComplex} represents the types that do not appear for real coefficients. For instance, the middle picture, labeled as (2b), shows the case where  $\I(p)^\compl$ consists of a bounded component with empty interior. This can not occur if $p$ has only real coefficients.
The other two pictures are discussed in the next two corollaries. The following corollary compares the algebraic degrees of the irreducible components in the boundary $\partial\I(p)$. Its proof comes at the end of the next section.
	
	\begin{figure}
		\begin{subfigure}{.24\textwidth}
			\centering
			\includegraphics[width=.9\linewidth]{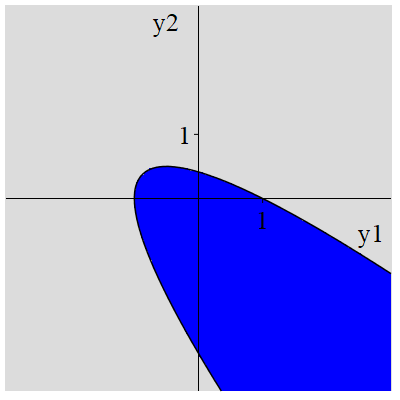}
			\caption*{(1a)}
		\end{subfigure}%
		\begin{subfigure}{.24\textwidth}
			\centering
			\includegraphics[width=.9\linewidth]{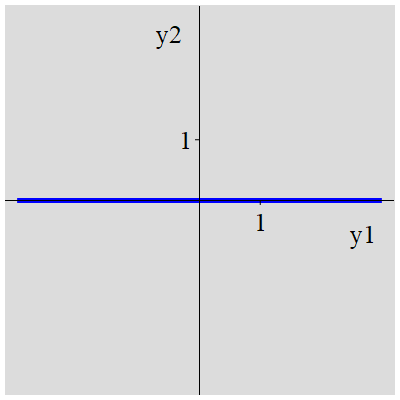}
			\caption*{(2b)}
		\end{subfigure} 
		\begin{subfigure}{.24\textwidth}
			\centering
			\includegraphics[width=.9\linewidth]{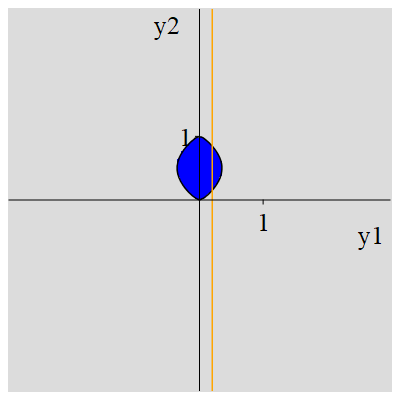}
			\caption*{(2c)}
		\end{subfigure}%
		\vspace{-2mm}	\caption{The complements of the imaginary projections are colored in blue. The
			pictures show cases in the classification of the imaginary
			projection for complex conics which do not appear for real conics.
The orange line in the right figure represents a generic line intersecting 
the boundary in two points, which is used to prove the
non-spectrahedrality of this example in Section~\ref{se:non-hyperbolic}.
			}\label{fig:improjComplex}
	\end{figure}
		\begin{cor}
			\label{co:alg-degrees}
			Let $p\in\C[z_1,z_2]$ be a complex conic.
			
			\begin{enumerate}
				\item The  boundary $\partial\mathcal{I}(p)$ may not be algebraic. The algebraic degree of any irreducible component in its Zariski closure is at most 8. The bound is tight. If $\mathcal{I}(p)^\compl$ has no bounded
				components, then $\partial\mathcal{I}(p)$ is algebraic and it consists of irreducible pieces of degree at most two.\vspace{2mm}
				\item If all coefficients are real, then $\partial\mathcal{I}(p)$ is algebraic and it consists of irreducible pieces of degree at most two.
			\end{enumerate}
		\end{cor}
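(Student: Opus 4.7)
The plan is to split by the five cases of the Main Classification Theorem \ref{th:complex-classification1} and read off the algebraic structure of $\partial\mathcal{I}(p)$ from the explicit descriptions of $\mathcal{I}(p)^\compl$ provided there and in Section \ref{subs:Conic-Hyp-Init}. Part (2) will follow by inspecting the eight normal forms in Theorem \ref{th:RealConicChar} directly.

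For the first half of part (1), the hypothesis that $\mathcal{I}(p)^\compl$ has no bounded components, combined with Corollary \ref{co:hyperbolicityBdd}, restricts $p$ either to the hyperbolic-initial cases (1a), (1b), or to subcases of (2a), (2b), (2c) whose complement is empty. In the hyperbolic cases, the explicit formulas in Section \ref{subs:Conic-Hyp-Init} express $\partial\mathcal{I}(p)$ as a union of at most two parallel lines (case 1a.1), a single parabola $y_1^2 + \gamma_{\mathrm{im}} y_2 = 0$ (case 1a.2), or level sets $y_1 y_2 = \mathrm{const}$ (case 1b), all of degree at most two. The empty-complement subcases give $\partial\mathcal{I}(p) = \emptyset$, which is trivially algebraic.

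For the upper bound of $8$, its tightness, and the non-algebraicity claim, I would focus on case (2c) with a bounded component. Tightness and non-algebraicity are both witnessed by the introductory example $p = z_1^2 + \mathrm{i} z_2^2 + z_2$ and its description \eqref{eq:example1}: the bounding polynomial is irreducible of degree $8$ over $\C$, while the isolated point $(0,1/2)$ lies in $\overline{\mathcal{I}(p)^\compl}$ but off that octic, so $\partial\mathcal{I}(p)$ is strictly larger than its one-dimensional Zariski closure. To prove the upper bound of $8$ in general, I would set up the system \eqref{PolySystem} for a normal form from case (2c) and eliminate $\mathbf{x}$ by taking a resultant followed by a univariate discriminant, as done in Example \ref{ex:caseB}. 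Since $p_{\mathrm{re}}$ and $p_{\mathrm{im}}$ are quadratic in $\mathbf{x}$, elimination theory gives a bound on the $\mathbf{y}$-degree that, after cancellation of leading coefficients as witnessed by the example, should land exactly at $8$.

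Part (2) is immediate from Theorem \ref{th:RealConicChar}: for each of the eight real normal forms, $\partial\mathcal{I}(p)$ is either empty, a union of real lines, the unit circle, or a real conic of the form $y_1^2 = y_2^2$ or $y_1^2 - y_2^2 = -1$, so its irreducible components have degree at most two. The main obstacle is securing the degree bound of $8$ in case (2c) uniformly across its normal forms: a naive B\'ezout-type count on the discriminant overshoots, and one needs the refined normal forms and cancellations worked out in Section \ref{se:non-hyperbolic}, together with a careful accounting of where additional isolated points in $\overline{\mathcal{I}(p)^\compl}$ can appear, to bring the degree down to $8$ and simultaneously exhibit the failure of algebraicity.
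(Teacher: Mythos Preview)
Your overall plan matches the paper's: reduce to the normal forms, read off degree at most two in the hyperbolic-initial and empty-complement cases, and for case (2c) eliminate one real variable to obtain a quartic in the remaining one, then bound the $\y$-degree of the discriminantal conditions. The paper carries this out concretely for the normal form (2c.2), showing that the quartic's discriminant factors as $Q_1^2\cdot q$ with $\deg q=8$ and that the auxiliary polynomials $P$, $D$ from Remark~\ref{re:quarticRoots} have degrees $2$ and $4$; this is the ``cancellation'' you anticipate but do not perform.

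Your explanation of non-algebraicity, however, is backwards. The point $(0,1/2)$ is \emph{not} off the octic: it is precisely the isolated real zero of that irreducible degree-$8$ polynomial (see the caption to Figure~\ref{fi:example1}). Moreover, $(0,1/2)$ lies in the \emph{interior} of $\I(p)^\compl$ (the paper shows an open ball around the translated point $\mathbf{0}$ is contained in $\I(q)^\compl$), so it does not belong to $\partial\I(p)$ at all. The boundary $\partial\I(p)$ is therefore the one-dimensional oval of the octic, which is a \emph{proper} Zariski-open subset of the irreducible real locus, not a set ``strictly larger than its Zariski closure'' (a set is always contained in its Zariski closure). Non-algebraicity follows because any polynomial vanishing on the oval must be divisible by the irreducible octic and hence also vanish at $(0,1/2)$, so the oval alone cannot be cut out by polynomial equations. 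You should rewrite that paragraph with the inclusion reversed.
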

		
Example \ref{ex:caseB}, that is shown in Figure \ref{fig:improjComplex} (2c), illustrates an instance where the above contrast appears. The next corollary compares the number and strict convexity of the unbounded components that occur in $\mathcal{I}(p)^\compl$ when $p$ is a complex or a real conic.
		
		\begin{cor}\label{cor:oneUnbdd}
			Let $p\in\C[z_1,z_2]$ be a complex conic.
			
			\begin{enumerate}
				\item The number of unbounded components in $\mathcal{I}(p)^\compl$ can be any integer $0\le k\le 4$ and up to 4 of them can be strictly convex.  \vspace{2mm}
				\item If all coefficients are real, the number of unbounded components in $\mathcal{I}(p)^\compl$ can be any integer $0\le k\le 4$ except for $k=1$ and up to 2 of them can be strictly convex. \vspace{2mm}
			\end{enumerate}
		\end{cor}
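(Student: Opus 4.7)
The plan is to derive both parts of the corollary by direct case analysis, reading off the answers from the explicit descriptions already established. First, by Corollary~\ref{co:hyperbolicityBdd}, unbounded components in $\mathcal{I}(p)^\compl$ arise only when $\init(p)$ is hyperbolic. So the analysis reduces to the three normal forms (1a.1), (1a.2), (1b) of Subsection~\ref{subs:Conic-Hyp-Init} in the complex case, and to the corresponding hyperbolic-initial entries of Theorem~\ref{th:RealConicChar} in the real case.

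For part~(1), I would exhibit a complex conic realizing each $k\in\{0,1,2,3,4\}$: a real ellipse realizes $k=0$; the normal form $p=z_1^2-1$ of case~(1a.1) with $\gamma\in\R_{\le 0}$ realizes $k=2$ via one line; $p=z_1^2+1$ of case~(1a.1) with $\gamma\not\in\R_{\le 0}$ realizes $k=3$ via two parallel lines; and $p=z_1z_2$ of case~(1b) with $\gamma=0$ realizes $k=4$ via four open quadrants. The critical new value $k=1$ is realized by $p=z_1^2+{\rm i}z_2$ of case~(1a.2) with $\gamma\not\in\R$: by the description in Subsection~\ref{subs:Conic-Hyp-Init}, $\mathcal{I}(p)^\compl=\{y_1^2+y_2<0\}$ is a single open strictly convex region. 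For the upper bound on strictly convex unbounded components, Example~\ref{ex:1b} ($q=z_1^2-z_2^2+2{\rm i}$) already exhibits four unbounded components, each bounded by a hyperbola branch, hence all four strictly convex.

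For part~(2), I would enumerate the cases of Theorem~\ref{th:RealConicChar}: cases $(i),(iv),(vii)$ give $k=0$; cases $(iii),(vi)$ give $k=2$; case $(viii)$ gives $k=3$; and cases $(ii),(v)$ give $k=4$. Only case $(ii)$ contributes strictly convex unbounded components, namely the two hyperbolic regions $\{y_2>\sqrt{y_1^2+1}\}$ and $\{y_2<-\sqrt{y_1^2+1}\}$; the two remaining wedges of $(ii)$, the four wedges of $(v)$, and the half-planes and strip of $(vi),(viii)$ all contain boundary line segments and hence fail strict convexity. Therefore $k\in\{0,2,3,4\}$ for real conics and at most two unbounded components can be strictly convex.

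The main obstacle is conceptual rather than computational: one must recognise that both the single-component case $k=1$ and the four-strictly-convex arrangement depend essentially on a non-real coefficient, arising from $\gamma\not\in\R$ in the normal forms (1a.2) and (1b) respectively. Once that is noted, the corollary reduces to bookkeeping against the normal-form lists already established.
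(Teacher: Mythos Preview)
Your proposal is correct and follows essentially the same route as the paper, which simply cites Theorems~\ref{th:RealConicChar} and~\ref{th:complex-classification1} together with Example~\ref{ex:1b}; you have merely spelled out the case-by-case bookkeeping that the paper leaves implicit, including explicit witnesses for each admissible $k$ and the strict-convexity counts.
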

The proof follows from Theorems \ref{th:RealConicChar} and \ref{th:complex-classification1}, together with Example \ref{ex:1b}. The highlighting difference in the previous corollary, i.e., when $\I(p)^\compl$ has one unbounded component, appears in the first class (1a) where the initial form has a double real root. Example \ref{ex:(1a.2)} provides such an instance and is shown in Figure \ref{fig:improjComplex} (1a).	

\medskip

Theorem~\ref{th:complex-classification1} is only proven by the end of Section \ref{se:non-hyperbolic}. In the previous section, we discussed the case where $p$ has hyperbolic initial form in details.  
It remains to consider the case where $\init(p)$ is not hyperbolic. As in Subsection \ref{subs:Conic-Hyp-Init}, we first need to compute proper normal forms and then by Lemma~\ref{le:group-actions-improj}, it suffices to compute the imaginary projections of those forms for each case.

	\begin{thm}[\textbf{Normal Form Classification}]
		\label{th:conic-classification1}
		With respect to the group $G_2$, there are infinitely many orbits for the complex conic sections with the following representatives.\vspace{-6mm}
		\setlength{\columnsep}{-20pt}
		\begin{multicols}{2}
			\begin{itemize}
				\item[]
				\item[]
				\item[(1a)]	$\begin{array}{l}
				{\rm(1a.1)}\,\,p=z_1^2+\gamma\vspace{2mm}\\
				{\rm(1a.2)}\,\	p=z_1^2+\gamma z_2
				\end{array}$
				\item[]
				\item[(1b)] $p=	z_1z_2+\gamma$ 
				\item[]
				\item[]
				\item[(2a)] $\begin{array}{l}
				{\rm(2a.1)}\,\,p=(z_1-{\rm i} z_2)^2 + \gamma\vspace{2mm}\\
				{\rm(2a.2)}\,\	p = (z_1 - {\rm i}z_2)^2 + \gamma z_2
				\end{array}$
				\item[]
				\item[(2b)] $p = z_2 (z_1 - \alpha z_2) + \gamma$\vspace{2mm}	
				\item[(2c)] $\begin{array}{l}
				{\rm(2c.1)}\,\,p = z_1^2+z_2^2+\gamma\vspace{2mm}\\
				{\rm(2c.2)}\,\	p=(z_1 - {\rm i} z_2)(z_1 - \alpha z_2)+\gamma
				\end{array}$				
			\end{itemize}
		\end{multicols}\vspace{-1mm}	
\noindent for some $\gamma,\alpha\in\C$ such that, to avoid overlapping, we assume $\gamma\neq 0$ in {\rm(1a.2)} and {\rm(2a.2)}, $\alpha\notin\R$ in {\rm(2b)} and {\rm(2c.2)}, and finally $\alpha\neq \pm {\rm i}$ in \rm{(2c.2)}.
	\end{thm}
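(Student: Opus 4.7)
The plan is to normalize each complex conic $p$ in two stages, working modulo the natural enlargement of $G_2$ by scalar multiplication $p \mapsto \lambda p$ with $\lambda \in \C^*$ (which leaves $\mathcal{I}(p)$ invariant). The initial form $\init(p)$ is a binary quadratic over $\C$ whose zero locus in $\P^1_\C$ is an unordered pair of points. Since $\mathrm{GL}_2(\R)$ acts on $\P^1_\C$ via $\mathrm{PGL}_2(\R)$, which is $3$-transitive on $\P^1_\R$ and transitive on each of the two open half planes of $\P^1_\C \setminus \P^1_\R$, the five combinatorial root arrangements (1a)--(2c) correspond precisely to the listed standard shapes $z_1^2$, $z_1 z_2$, $(z_1 - {\rm i} z_2)^2$, $z_2(z_1 - \alpha z_2)$, $z_1^2 + z_2^2$, and $(z_1 - {\rm i} z_2)(z_1 - \alpha z_2)$. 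In cases (2b) and (2c.2) the parameter $\alpha \notin \R$ records the non-real root that cannot be normalized further once the other root is fixed; the constraint $\alpha \neq \pm {\rm i}$ in (2c.2) enforces distinct non-conjugate roots, which separates (2c.2) from both (2a) and (2c.1).

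Next, writing $p = \init(p) + d z_1 + e z_2 + f$ with $d, e, f \in \C$, I would apply a complex translation $\z \mapsto \z + (a, b)$ and track how the linear part transforms. The change is a linear system in $(a, b)$ whose coefficient matrix equals (up to a factor of $2$) the Hessian of $\init(p)$. For the four non-degenerate initial forms (1b), (2b), (2c.1) and (2c.2) this Hessian is invertible -- in (2c.2) its determinant computes to $-(\alpha - {\rm i})^2$, which is non-zero exactly by the assumption $\alpha \neq {\rm i}$ -- so both linear coefficients can be killed simultaneously and only a constant $\gamma$ survives. For the two degenerate (perfect-square) initial forms (1a) and (2a), the Hessian has rank one: translations absorb only a one-dimensional combination of the linear coefficients, leaving a residual term $\gamma z_2$ together with a constant. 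If $\gamma = 0$ we land in (1a.1) or (2a.1), while if $\gamma \neq 0$ the residual one-parameter family of translations can additionally kill the constant, yielding (1a.2) or (2a.2).

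The main obstacle is the bookkeeping in the degenerate cases and in (2c.2). In case (2a) one must verify that after eliminating the $z_1$-coefficient, the residual $z_2$-coefficient equals $e + {\rm i} d$ independently of the remaining translation freedom, which is what makes the dichotomy $\gamma = 0$ versus $\gamma \neq 0$ intrinsic to the orbit; and in (2c.2) one must compute the determinant $-(\alpha - {\rm i})^2$ explicitly in order to pinpoint the boundary of the class. Disjointness of the seven normal forms and the necessity of the stated side conditions then follow from the observation that the $\P^1_\C$-configuration of the roots of $\init(p)$ is an invariant of the $G_2$- and $\C^*$-orbit of $p$, so conics falling into different root-arrangement cases cannot be equivalent.
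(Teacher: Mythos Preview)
Your two-stage normalization---first the root configuration of $\init(p)$ via $\mathrm{GL}_2(\R)$, then the linear part via complex translation governed by the Hessian of $\init(p)$---is exactly the paper's argument, with your Hessian framework giving a cleaner uniform phrasing of what the paper does by appeal to Lemma~\ref{le:hyp-Init-quadratic-forms}. Two minor points are worth flagging. First, your justification for the parameter $\alpha$ in (2b) is not quite right: once the real root is placed at $(1{:}0)$, the stabilizer in $\mathrm{PGL}_2(\R)$ is the full real affine group on the other chart, which \emph{can} move any non-real $\alpha$ to ${\rm i}$; so $\alpha$ in (2b) is a redundancy the paper keeps for convenience, not an invariant (your argument is correct for (2c.2), where the stabilizer of ${\rm i}$ is only a circle). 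Second, you do not address the ``infinitely many orbits'' clause: the paper separately checks, at least for (1a.1), that distinct values of $\gamma$ lie in distinct $G_2$-orbits, and this step is what actually forces the orbit count to be infinite.
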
	
	\begin{proof}
				By applying a real linear transformation we first map the roots of $\init(p)$ to $(0:1)$ in (1a),
		 to $(1:0)$ and $(0:1)$ in (1b), to $({\rm i : 1})$ in (2a), to $(1:0)$ and $(\alpha,1)$ such that $\alpha\notin\R$ in (2b), to $(\pm{\rm i} : 1)$ in (2c.1), and to $({\rm i} : 1)$ and $(\alpha : 1)$ such that $\alpha\notin\R$ and $\alpha\neq\pm{\rm i}$ in (2c.2). Then, similar to the proof of Lemma \ref{le:hyp-Init-quadratic-forms}, by eliminating some linear terms or the constant by complex translations we arrive at the given normal forms for each case. 		 
			Since the arrangements of the two roots in $\P^1$ is invariant under the action of $G_2$, the given five cases lie in different orbits. Note that the orbits of the subcases in each case do not overlap. For the subcases of (1a), in (1a.2), $z_1$ and $z_2$ may be transformed to $az_1+bz_2+e$ and $cz_1+dz_2+f$ with $a,b,c,d\in\R$ and $e,f\in\C$. This leads to $(az_1+bz_2+e)^2+\gamma$. Since $z_2^2$ does not appear in the normal form of case (1a.2), we get $b=0$ and thus $z_2$ can not appear. Further $z_1^2+\gamma_1$ and $z_1^2+\gamma_2$ with $\gamma_1\neq \gamma_2$ belong to different orbits since the previous argument enforces $a=1,b=0,e=0$. The other cases are similar. Thus, for any of the eight normal forms, there are infinitely many orbits corresponding to each $\gamma\in\C$ (and $\alpha\in\C$ in some cases).
	\end{proof}
		

		\section{Complex conics with non-hyperbolic initial form\label{se:non-hyperbolic}}
		
		We complete the proof of the Topological Classification 
		Theorem~\ref{th:complex-classification1} by treating the case where the complex
		conic $p \in \C[\z] = \C[z_1,z_2]$ does not have a hyperbolic initial form.
		In particular, we see that, as previously stated in Corollary \ref{co:hyperbolicityBdd}, if the initial form of $p$ is not hyperbolic, then $\mathcal{I}(p)^\compl$ is empty or consists of one bounded component whose interior is non-empty only if $\init(p)$ has two distinct non-real roots in $\P^1$. 
		
		The overall steps in computing the imaginary projection of the cases with non-hyperbolic initial form are as follows. After building up the real polynomial system for the classes (2b) and (2c.1) of
 Theorem~\ref{th:conic-classification1}
 as in~\eqref{PolySystem}, we use the same techniques as in Subsection \ref{subs:Conic-Hyp-Init}. However, in the case (2a), by the nature of the polynomial system, we directly argue that the imaginary projection is $\R^2$. In the last case (2c.2), we do not explicitly represent the components of $\I(p)^\compl$. Instead, in Theorem~\ref{OnebddComp}
		we prove that it does not contain any unbounded components and the number of bounded components does not exceed one.
		
		
		\subsection{A double non-real root (2a)} We show that in this case we have a full space imaginary projection.
		First consider the normal form (2a.1). We have
		\[	\begin{matrix}
			p_{\mathrm{re}} & = &
			x_1 ^2- x_2^2+ 2  y_2 x_1+ 2 y_1 x_2  + 
			\gamma_{\mathrm{re}}x_2-y_1^2  + y_2^2 -\gamma_{\mathrm{im}}y_2&=& 0, \\
			p_{\mathrm{im}} & = &
			- 2 x_1 x_2 +2 y_1x_1  - 2y_2 x_2 + \gamma_{\mathrm{im}}x_2 + 2 y_1 y_2  + \gamma_{\mathrm{re}}y_2&= &0.
		\end{matrix}\]
		
		We prove $\mathcal{I}(p) = \R^2$ by showing that for every given $\y \in \R^2$, 
		these two real conics in $\x=(x_1,x_2)$ have a real intersection point. For any fixed $\y \in \R^2$, the bivariate polynomial 
		$p_\mathrm{re}$ in $\x$
		has the quadratic part $x_1^2-x_2^2$, and hence, the equation $p_{\mathrm{re}}=0$ defines
		a real hyperbola in $\x$ with 
		asymptotes
		$x_1=x_2+c_1$ and $x_1=-x_2+c_2$ for some constants $c_1,c_2 \in \R$; possibly the hyperbola
		degenerates to  
		a union of these two lines.
		The degree two part of the polynomial $p_{\mathrm{im}}$ is given by $-2x_1x_2$
		and hence, the equation $p_{\mathrm{im}}=0$
		defines a real hyperbola in $\x$ with asymptotes
		$x_1= d_1$ and $x_2=d_2$ for some constants $d_1, d_2 \in \R$;
		possibly the hyperbola may degenerate to a union of these two lines.
		Since the two hyperbolas have a real intersection point, the
		claim follows. The case (2a.2) is similar.

		
		\subsection{One real and one non-real root (2b)}\label{subs:real-non-real}
		
		This case gives the system of 
		equations
		\[	\begin{matrix}
			p_{\mathrm{re}}  & = & - \alpha_{\mathrm{re}} x_2^2+ x_1 x_2 + 2 \alpha_{\mathrm{im}} y_2 x_2 + \alpha_{\mathrm{re}} y_2^2 - y_1 y_2  
			+ \gamma_{\mathrm{re}} & = & 0, \\
			p_{\mathrm{im}} & = &	- \alpha_{\mathrm{im}} x_2^2+ y_2 x_1+ y_1x_2   -2 \alpha_{\mathrm{re}}y_2 x_2 
			+ \alpha_{\mathrm{im}} y_2^2 + \gamma_{\mathrm{im}} & =& 0.
		\end{matrix}
		\]
		
		First assume $y_2\neq 0$. By solving the second equation for $x_1$, substituting the solution into the first equation and clearing the denominator, we get a univariate cubic polynomial in $x_2$ with non-zero leading coefficient. Since real cubic polynomials always have a real root,
		this shows that for $\y \in \R^2$ with $y_2 \neq 0$, there is a solution $\x \in \R^2$.
		
		It remains to consider $y_2 = 0$. 
		In this case, the second equation has a real solution in $x_2$
		whenever the corresponding discriminant $y_1^2 + 4 \alpha_{\mathrm{im}} \gamma_{\mathrm{im}}$ is non-negative, and if one of these solutions
		is non-zero, the first equation then gives a real solution for $x_1$.
		The special case that in the second equation both solutions for $x_2$ are 
		zero, can only occur for $y_1 = 0$ and $\gamma_{\mathrm{im}} = 0$. Then the first
		equation has a real solution for $x_1$ if and only if $\gamma_{\mathrm{re}} = 0$.
		Altogether, we obtain
		\begin{equation*}
		\tag{\text{2b}}
				\mathcal{I}(p) \ = \ \begin{cases}
		\R^2 & \text{ if } \gamma=0 \ \ \text{or} \ \ \alpha_{\mathrm{im}}\gamma_{\mathrm{im}} > 0, \\
		\R^2 \setminus \{\mathbf{0}\} & \text{ if } \gamma\in\R\setminus\{0\},\\
		\R^2 \setminus \{(y_1,0) \ : \ y_1^2 < -4 \alpha_{\mathrm{im}} \gamma_{\mathrm{im}}\} &
		\text{ if } \alpha_{\mathrm{im}} \gamma_{\mathrm{im}} < 0.
		\end{cases}
		\end{equation*}

		Note that when $\gamma\in\R\setminus\{0\}$ then $\I(p)$ is open but not $\R^2$. This answers Question \ref{que:open-close}.
		See Figure \ref{fig:improjComplex} (2b) for the imaginary projection of $p = z_2(z_1-{\rm i} z_2)-{\rm i}$ from this class.

		
		\subsection{Two distinct non-real roots (2c)}\label{subs:2complex}
		First we show that in (2c.1), i.e., where the roots of the initial form are complex conjugate, the imaginary projection is one open bounded component. After forming the polynomial system (\ref{PolySystem}), the same methods as those in Subsection \ref{subs:Conic-Hyp-Init}, i.e., taking the resultant of the two polynomials $p_{\rm re}$ and $p_{\rm im}$ with respect to $x_2$ and checking the discriminantal conditions to have a real $x_1$, lead to the imaginary projection
		\begin{equation*}\label{k-disc}
		\tag{\text{2c.1}}
\I(p) = \Big\{\y\in\R^2 : y_1^2+y_2^2\ge \frac{1}{2}(\gamma_{\mathrm{re}}+\sqrt{\gamma_{\mathrm{re}}^2+\gamma_{\mathrm{im}}^2})\Big\}.
\end{equation*}
In particular, we have  $\I(p) = \R^2$ if and only if $\gamma_{\mathrm{im}}=0$ and $\gamma_{\mathrm{re}}\le0$. Hence, in the case of two non-real conjugate roots,
$\mathcal{I}(p)^{\compl}$ consists of either one or zero bounded component
and it is a spectrahedral set. 


The subsequent lemma shows that for the case (2c) in general
 $\mathcal{I}(p)^\compl$  is either empty or consists of one 
bounded component. 

		\begin{lemma}\label{OnebddComp}
		Let $p =  (z_1 - \alpha z_2) (z_1 - \beta z_2) + d z_1 + e z_2 + f$
		with $\alpha, \beta \not \in \R$ and $d,e,f \in \C$.
		Then
		\begin{enumerate}
		\item $\mathcal{I}(p)^{\compl}$ has at most one bounded component.
		\item $\mathcal{I}(p)^{\compl}$ does not have unbounded components.
		\end{enumerate}
		\end{lemma}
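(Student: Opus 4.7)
\emph{Setup.} We may assume $\alpha \neq \beta$; the case $\alpha = \beta$ falls under class (2a) where $\mathcal{I}(p) = \R^2$. The central tool is the complex linear substitution $w_1 = z_1 - \alpha z_2$, $w_2 = z_1 - \beta z_2$, followed by a complex translation in the new coordinates to remove the linear terms in $w$, which brings $p$ to the form $p = w_1 w_2 + \gamma$ for some $\gamma \in \C$. If $\gamma = 0$, then $V(p)$ is a union of two complex lines whose imaginary projections are each $\R^2$, so $\mathcal{I}(p)^\compl = \emptyset$; henceforth assume $\gamma \neq 0$. The variety $V(p) \cong \{w_1 w_2 = -\gamma\}$ is parameterised by $w_1 \in \C^{*}$ via $w_2 = -\gamma/w_1$, and in the original $z$-coordinates
\[
z_j(w_1) = A_j/w_1 + B_j w_1 + c_j, \qquad j = 1, 2,
\]
for certain complex constants depending on $\alpha, \beta, \gamma, d, e$. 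By Lemma~\ref{le:group-actions-improj}, the contribution of the $c_j$ translates $\mathcal{I}(p)$ by the fixed vector $-(\Im c_1, \Im c_2) \in \R^2$ without changing the number of components, so we normalise to $c_j = 0$; the map $\phi\colon \C^{*} \to \R^2$, $\phi(w_1) := (\Im z_1(w_1), \Im z_2(w_1))$, then satisfies $\phi(-w_1) = -\phi(w_1)$ and has image $\mathcal{I}(p)$.

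\emph{Part (2).} Suppose for contradiction that $\mathcal{I}(p)^\compl$ has an unbounded convex component, which then contains a ray $\{y_0 + tw : t \geq 0\}$ with $w \in \R^2 \setminus \{0\}$. Using $\alpha \notin \R$, define the $\R$-linear map $y \mapsto x^\alpha(y) \in \R^2$ by requiring $z_1 - \alpha z_2 = 0$ at $z = x^\alpha(y) + iy$, and search for real solutions of $p(x + iy) = 0$ of the form $x = x^\alpha(y) + x'$. Setting $\xi := x_1' - \alpha x_2'$ and $\eta := x_1' - \beta x_2'$, a direct expansion gives
\[
p(x + iy) = (At + B)\bigl[(\alpha - \beta)\xi + (d\alpha + e)\bigr] + \xi\eta + dx_1' + ex_2' + f,
\]
where $A = x_2^\alpha(w) + iw_2 \in \C \setminus \{0\}$ (since $w \neq 0$ and $\alpha_{\im} \neq 0$) and $B = x_2^\alpha(y_0) + iy_{0,2}$. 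Dividing by $t$ and letting $t \to \infty$, the leading equation $A[(\alpha - \beta)\xi + (d\alpha + e)] = 0$ forces $\xi \to \xi^* := -(d\alpha + e)/(\alpha - \beta)$, corresponding to a unique real $(x_1'^*, x_2'^*)$. The Jacobian of the two real equations in $(x_1', x_2')$ at $\xi^*$ has leading-order determinant $-|A|^2 t^2 |\alpha - \beta|^2 \alpha_{\im}$, which is nonzero for large $t$. The real implicit function theorem then yields a real $(x_1'(t), x_2'(t))$ for all sufficiently large $t$, so $y_0 + tw \in \mathcal{I}(p)$ eventually, the desired contradiction.

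\emph{Part (1).} Parameterising $w_1 = re^{i\theta}$ gives
\[
\phi(re^{i\theta}) = \bigl(c_1(r), c_2(r)\bigr)\sin\theta + \bigl(d_1(r), d_2(r)\bigr)\cos\theta,
\]
where $c_j(r) = \Re(B_j)\,r - \Re(A_j)/r$ and $d_j(r) = \Im(A_j)/r + \Im(B_j)\,r$. Thus $E_r := \phi(\{|w_1| = r\})$ is a centrally symmetric closed curve: an ellipse centered at the origin when the matrix with rows $(c_1, d_1), (c_2, d_2)$ is invertible, and a segment through the origin otherwise. Since $|c_j(r)|, |d_j(r)| \to \infty$ as $r \to 0$ or $r \to \infty$, any fixed $y_0 \in \R^2$ lies in the interior of $E_r$ for all sufficiently small and all sufficiently large $r$. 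If $y_0 \in \mathcal{I}(p)^\compl$, then $y_0 \notin E_r$ for every $r$, and by continuity in $r$ the point $y_0$ cannot transition between interior and exterior of $E_r$, so it lies in the interior of every non-degenerate $E_r$. Hence
\[
\mathcal{I}(p)^\compl \ \subseteq \ \bigcap_{r \,:\, E_r \text{ non-deg.}} \Int(E_r),
\]
an intersection of open convex sets, hence itself convex. If moreover some $E_{r_0}$ is degenerate, the interiors of the neighbouring non-degenerate $E_r$'s collapse onto the segment $E_{r_0}$, so the above intersection is contained in $E_{r_0} \subseteq \mathcal{I}(p)$, giving $\mathcal{I}(p)^\compl = \emptyset$. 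In all cases $\mathcal{I}(p)^\compl$ lies within a single convex set, hence has at most one connected component, which by Part~(2) must be bounded.

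\emph{Main obstacle.} The technical heart is the ``ellipse sweeping'' argument in Part~(1): showing that the family $\{E_r\}_{r>0}$ sweeps out $\mathcal{I}(p)$ in such a nested fashion that complement points are trapped in the common interior, and handling the degenerate case where some $E_{r_0}$ collapses to a segment. A priori such a segment could disconnect the convex intersection, but a limit argument shows the intersection is then already contained in the segment, forcing the complement to be empty.
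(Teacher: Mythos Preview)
Your approach is genuinely different from the paper's and is largely correct, but there is one real gap and one imprecision worth fixing.

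\textbf{The gap in Part (1).} Your final inference ``$\mathcal{I}(p)^{\compl}$ lies within a single convex set, hence has at most one connected component'' is a non-sequitur: any subset of $\R^2$ lies in a convex set. What you have actually shown is the \emph{equality}
\[
\mathcal{I}(p)^{\compl} \ = \ \bigcap_{r>0}\Int(E_r),
\]
not merely an inclusion. Indeed, since $\mathcal{I}(p)=\phi(\C^{*})=\bigcup_{r>0}E_r$, any point in $\bigcap_r\Int(E_r)$ avoids every $E_r$ and hence lies in $\mathcal{I}(p)^{\compl}$. Once you state this equality, $\mathcal{I}(p)^{\compl}$ is an intersection of convex sets and therefore itself convex, which immediately gives at most one component (and incidentally a stronger conclusion than the paper's).

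\textbf{A minor imprecision.} The claim ``$|c_j(r)|,|d_j(r)|\to\infty$'' need not hold coordinatewise (e.g.\ if $\Re A_1=0$). What is true, and what you need, is that $M(r)=r\,M_B+\tfrac{1}{r}M_A$ with $M_A,M_B$ invertible: a short computation using $A_1=\alpha A_2$, $B_1=\beta B_2$ and $\alpha_{\im},\beta_{\im}\neq 0$ gives $\det M_A,\det M_B\neq 0$. Hence $M(r)^{-1}\to 0$ as $r\to 0$ or $r\to\infty$, and every fixed $y_0$ is eventually interior to $E_r$. Your degenerate-case handling is fine; phrasing it via the winding number of $E_r$ around $y_0$ (which is locally constant in $r$ off $E_r$, equals $0$ at a degenerate $r_0$, and equals $\pm 1$ for small $r$) makes the argument watertight.

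\textbf{Comparison with the paper.} The paper's proof of (1) slices by a line (arranged to meet two putative components), restricts to $z_2\in\R$, and uses the two continuous branches of the quadratic formula~\eqref{eq:one-component-branch1} to bound the number of components of $\mathcal{I}(p)\cap\{y_2=0\}$ by two, forcing a contradiction. For (2) the paper again uses these branches and a direct asymptotic analysis of the radicand as $z_2\to\pm\infty$. Your route---a rational parametrisation of the conic giving a one-parameter family of concentric ellipses for (1), and an implicit-function-theorem argument at $t=\infty$ for (2)---is more conceptual and in fact yields the stronger statement that $\mathcal{I}(p)^{\compl}$ is convex. The paper's argument, on the other hand, is shorter and avoids the normalisation bookkeeping.
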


		\begin{proof}
		(1) Assume that there are at least two bounded components in $\mathcal{I}(p)^{\compl}$. By Lemma~\ref{le:group-actions-improj},
		we can assume without loss of generality that the 
		$y_1$-axis intersects both components. 
			Solving $p=0$ for $z_1$ gives
			{\small
				\begin{equation}
					\label{eq:one-component-branch1}
					z_1 \ = \ \frac{\alpha + \beta}{2}z_2
					           - \frac{d}{2} 
					+ \sqrt[\C]{\left( \frac{\alpha-\beta}{2} \right)^2 z_2^2 - e z_2 -f } \, .
				\end{equation}
			}
			By letting $z_2\in\R$
			we obtain two continuous 
			branches $y_1^{(1)}(z_2)$ and $y_1^{(2)}(z_2)$
			satisfying~\eqref{eq:one-component-branch1}.
			Therefore, the set  
			$\mathcal{I}(p) \cap \{\y \in \R^2 \, : \, y_2 = 0\}$ has at most two connected components.
			This is a contradiction to our assumption that the $y_1$-axis intersects the two bounded components in $\mathcal{I}(p)^\compl$.

For (2), assume that there exists an unbounded component in the complement of $\mathcal{I}(p)$. The convexity implies that it must contain a ray. By Lemma~\ref{le:group-actions-improj},
we can assume without loss
of generality that the ray is the non-negative part of the $y_1$-axis. 
Similarly to the proof of (1), we set $y_2 = 0$ and check
the imaginary projection on $y_1$-axis, using the two complex solutions
in~\eqref{eq:one-component-branch1}. 
			Since $\alpha \neq \beta$, we have
			$D:= 
			\left( \frac{\alpha-\beta}{2} \right)^2
			\neq 0$, where	$D$ is the discriminant of $\init(p)$ with
			$z_2$ substituted to 1. We consider two cases:  $D \not\in \R_{>0}$ and  $D \in \R_{>0}$.
			In both cases we get into a contradiction to the assumption that the unbounded component contains the non-negative part of the $y_1$-axis. 
			
			First assume $D \not\in \R_{>0}$. 
			For $z_2 \to \pm \infty$,
			the imaginary part of the radicand is dominated by the imaginary part of
			the square root of $D$.  Since $D \not\in \R_{>0}$ at least one of the two expressions\vspace{-2mm}
			{\small
				\begin{equation*}
					\label{eq:dominating1}
					\left(\frac{\alpha+\beta}{2}\right)_{\mathrm{im}} \pm
					\sqrt{\frac{-D_{\mathrm{re}}+\sqrt{D_{\mathrm{re}}^2+D_{\mathrm{im}}^2}}{2}}\,\,
			\end{equation*}}
			is non-zero. Thus, letting $z_2\mapsto\pm\infty$, implies $y_1\mapsto+\infty$ in at least one of the branches. 
			
			Now assume $D \in \R_{>0}$. This implies $(\alpha - \beta)/2 \in \R$. Thus $(\alpha + \beta)/2 \notin \R$, since otherwise it contradicts with $\alpha,\beta\notin\R$. In this case, by letting $z_2$ grow to infinity, the dominating expression for $y_1$ is 
			$\frac{1}{2}(\alpha+\beta)_{\rm im}z_2.$
			Therefore, $y_1$ converges to $+\infty$ in one of the two branches.
				In both cases, for some $s>0$, the ray $\{(y_1,0)\in\R^2:y_1\ge s\}$ lies in the imaginary projection. This completes the proof.		
		\end{proof}

	Before, in Example \ref{ex:caseB} we have shown that the defining polynomial of the imaginary projection can be irreducible of degree 8. The previous lemma enables us to 
	show that $\mathcal{I}(q)^\compl$
	has exactly one bounded component. Note that $\mathbf{0}\in\mathcal{I}(q)^\compl$. Let $B_\epsilon$ be an open ball with center at the origin and radius $\epsilon$. By letting $y_1$ and $y_2$ converge to zero, the dominating part of $\Delta$ is $y_1^4+y_2^2$. Thus, for sufficiently small $\epsilon$, any non-zero point in $B_\epsilon$ has $\Delta>0$. Therefore,  $\mathcal{I}(q)^\compl$ contains an open ball around the origin.
	Now the claim follows from Theorems \ref{OnebddComp}.

			In this example, the imaginary projection is Euclidean closed, i.e., $\overline{\mathcal{I}(q)}=\mathcal{I}(q)$, however, its boundary is not Zariski closed. 
			We claim that the set $\mathcal{I}(q)^\compl$ is not a spectrahedron. 
			By the characterization of Helton and Vinnikov \cite{helton-vinnikov-2007},
			it suffices to show that $\overline{\mathcal{I}(q)}$ 
			is not rigidly convex. That is, if $h$ is a defining polynomial
			of minimal degree for the component $\mathcal{I}(q)^\compl$, then we have to
			show that a generic line $\ell$ through the interior of $\mathcal{I}(q)^\compl$ does not
			meet the variety $V:=\{\x \in \R^2 \, : \, h(\x) = 0\}$ in exactly $\deg(h)$
			many real points, counting multiplicities.
			However, this can be checked immediately. For example, the 
			line $y_1 = 1/3$ intersects the variety $V$ in exactly
			two real points, and any sufficiently small perturbation of the line
			preserves the number of real intersection points. See Figure \ref{fig:improjComplex} (2c).
			
			\smallskip

		This completes the proof of Theorem~\ref{th:complex-classification1}. We now prove Corollary \ref{co:alg-degrees} by showing that 8 is an upper bound.
		
		\medskip
		
		\noindent	
		\textit{Proof of Corollary \ref{co:alg-degrees}}. For the first four classes we have precisely computed the boundaries $\partial\I(p)$ and they are algebraic with irreducible components of degree at most two. It remains to consider the case (2c), more precisely (2c.2), where
			$p=(z_1 - {\rm i} z_2)(z_1 - \alpha z_2)+\gamma$
			for some $\alpha,\gamma\in\C$, $\alpha\notin\R$, and $\alpha\neq {\rm \pm i}$.
		Using Remark \ref{re:quarticRoots}, we show that the degrees of the irreducible components in the Zariski closure of $\partial\I(p)$ do not exceed $8$. This, together with Example \ref{ex:caseB}, completes the proof of (1). We separate the real and the imaginary parts as before.
		{\small
			\[
			p_{\mathrm{re}} =x_{1}^{2}\!+(\!(\alpha_{\mathrm{im}}+1) y_{2}\!)-\alpha_{\mathrm{re}} x_{2}) x_{1}-\alpha_{\mathrm{im}} x_{2}^{2}+(\!(\!\alpha_{\mathrm{im}}+1) y_{1}-2 \alpha_{\mathrm{re}} y_{2}) x_{2}+\alpha_{\mathrm{re}} y_{2} y_{1}+\alpha_{\mathrm{im}} y_{2}^{2}-y_{1}^{2}\!+\gamma_{\mathrm{re}}=0, 
			\]\[
			p_{\mathrm{im}} =\! ((\alpha_{\mathrm{im}}+1) x_{2}+\alpha_{\mathrm{re}} y_{2}-2 y_{1}) x_{1}-\alpha_{\mathrm{re}} x_{2}^{2}+(\alpha_{\mathrm{re}} y_{1}+2 \alpha_{\mathrm{im}} y_{2}) x_{2}+\alpha_{\mathrm{re}} y_{2}^{2}-(\alpha_{\mathrm{im}}+1) y_{1} y_{2}-\gamma_{\mathrm{im}}= 0.	\vspace{2mm}
			\]
		}	
		First we assume $(\alpha_{\mathrm{im}}+1) x_{2}+\alpha_{\mathrm{re}} y_{2}-2 y_{1}\neq0$. Solving $p_{\mathrm{im}} =0$ for $x_1$
		and substituting in $p_{\mathrm{re}} = 0$ returns
		{\small
			\[
			\Big( \alpha_{\mathrm{im}}(\alpha_{\mathrm{re}}^{2}+(\alpha_{\mathrm{im}}+1)^2) \Big) x_2^4
			-\Big((\alpha_{1}^{2}+\alpha_{2}^{2}+6 \alpha_{2}+1) (-\alpha_{1} y_{2}+y_{1} (\alpha_{2}+1)) \Big) x_2^3\vspace{-1mm}
			+\Big((\alpha_{1}^{2}+5 \alpha_{2}^{2}+14 \alpha_{2}+5) y_{1}^{2}
			\]\vspace{-3mm}\[
			-y_{1} \alpha_{1} (\alpha_{1}^{2}+\alpha_{2}^{2}+14 \alpha_{2}+9) y_{2}+(4 \alpha_{1}^{2}+\alpha_{2} (\alpha_{1}^{2}+(\alpha_{2}-1)^2)) y_{2}^{2}  +(k_{2} \alpha_{1}-2 k_{1} -k_{1} \alpha_{2})\alpha_{2}-k_{2} \alpha_{1}-k_{1}\Big) x_{2}^{2}
			\]\vspace{-3mm}
			\[
			+\Big(8(-\alpha_{2}-1) y_{1}^{3}+8 \alpha_{1} (\alpha_{2}+2) y_{1}^{2} y_{2}-(\alpha_{2} (\alpha_{1}^{2}+\alpha_{2}^{2}-\alpha_{2}-1)+9\alpha_{1}^{2}+1) y_{1} y_{2}^{2}+\alpha_{1} (\alpha_{1}^{2}+(\alpha_{2}^{}-1)^{2}) y_{2}^{3}
			\]\vspace{-3mm}\[
			+4 k_{1}(\alpha_{2}+1) y_{1}+((\alpha_{1}^{2}-(\alpha_{2}-1)^{2}) k_{2}-2 k_{1} \alpha_{1} (\alpha_{2}+1)) y_{2}\Big)x_2       +   4 y_{1}^{4}-8 \alpha_{1} y_{1}^{3} y_{2}+(5 \alpha_{1}^{2}+(\alpha_{2}-1)^{2}) y_{1}^{2} y_{2}^{2}
			\]\vspace{-3mm}\[
			-\alpha_{1} (\alpha_{1}^{2}+(\alpha_{2}-1)^{2}) y_{1} y_{2}^{3}-4 k_{1} y_{1}^{2}+4\alpha_{1} k_{1} y_{1} y_{2}-\alpha_{1} (k_{1} \alpha_{1}+\alpha_{2} k_{2}-k_{2}) y_{2}^{2}-k_{2}^{2}.\vspace{2mm}
			\]
		}		
		Since $\alpha\notin\R$, the leading coefficient is non-zero. Therefore, we have a quartic univariate polynomial in $x_2$. The relevant
 polynomials for the decision of whether this polynomial has a real root for $x_2$ are $P,D$ and the discriminant $\Disc$ from Remark~\ref{re:quarticRoots}. 	
		By computing these polynomials, we observe that $\Disc$ decomposes as $Q_1^2\cdot q$, where $Q_1$ is a quadratic polynomial and $q$ is 
		of degree $8$ in $\mathbf{y}$. 
		The total degrees of $P$ and $D$ are $2$ and $4$, respectively.

		Now let us assume $(\alpha_{\mathrm{im}}+1) x_{2}+\alpha_{\mathrm{re}} y_{2}-2 y_{1} = 0$. If $\alpha_{\mathrm{im}}\neq -1$, then substituting $x_2 = \frac{-\alpha_{\mathrm{re}} y_{2}+2 y_{1}}{\alpha_{\mathrm{im}}+1}$ into $p_{\mathrm{im}}=0$ is the quadratic $Q_1$. Otherwise, the substitution $\alpha_{\mathrm{im}}= -1$ and $y_{1}=\frac{\alpha_{\mathrm{re}} y_{2}}{2}$ in $p_{\mathrm{re}}$ and $	p_{\mathrm{im}}$, and setting $s = 	2p_{\mathrm{im}}-\alpha_{\mathrm{re}}p_{\mathrm{re}}$ simplifies the original system to
		\[\begin{matrix}
			p_{\mathrm{re}} &=& \alpha_{\mathrm{re}}^{2} y_{2}^{2}-4 \alpha_{\mathrm{re}} x_{1} x_{2}-8 \alpha_{\mathrm{re}} x_{2} y_{2}+4 x_{1}^{2}+4 x_{2}^{2}-4 y_{2}^{2}+4 \gamma_{\mathrm{re}}&=&0,\\\vspace{-3mm}\\
			s &=& 2 (2 \alpha_{\mathrm{re}}^{2} x_{1}+3 \alpha_{\mathrm{re}}^{2} y_{2}+4 y_{2})x_{2}-(\alpha_{\mathrm{re}}^{3} y_{2}^{2}+4 \alpha_{\mathrm{re}} x_{1}^{2}+4 \gamma_{\mathrm{re}} \alpha_{1}-4 \gamma_{\mathrm{im}})&=&0.
		\end{matrix}
		\]
		
		If the coefficient of $x_2$ in $s$ is non-zero, then solving $s=0$ for $x_2$ and substituting in $p_{\mathrm{re}}=0$ results in a quartic polynomial in $x_1$ with non-zero leading coefficient. In this case, the polynomials Disc, P, and D from Remark \ref{re:quarticRoots} are all univariate in $y_2$. The decomposition of the discriminant in this case consists of the polynomial $q$ after the substitution  $y_{1}=\frac{\alpha_{\mathrm{re}} y_{2}}{2}$ and the square of a quadratic polynomial $Q_2$. The total degrees of $P$ and $D$ are $2$ and $4$, respectively.
		
		Otherwise, solving $2 \alpha_{\mathrm{re}}^{2} x_{1}+3 \alpha_{\mathrm{re}}^{2} y_{2}+4 y_{2}=0$ for $x_1$ and substituting in $s=0$, results in $Q_2$. In all the cases that we have discussed above, the degree of none of the irreducible factors appearing in the polynomials that could possibly form the $\partial\I(p)$ exceeds 8. Example \ref{ex:caseB} shows an example where this bound is reached. This completes the proof of (1). (2) follows from Theorem \ref{th:RealConicChar}.  \hfill $\Box$
		
		\medskip
		
		We have precisely verified  the imaginary projections for all the normal forms in Theorem \ref{th:conic-classification1} except for (2c.2) . In particular, we have shown that if $p$ is not of the class (2c.2), then $\I(p) = \R^2$ if and only if there exist some $\gamma,\alpha\in\C$, and $\alpha\notin\R$ such that $p$ can be transformed to one of the following normal forms.	
		\begin{equation}\label{list:ConicImprojR2}
			\begin{cases}
				(2a): (z_1 - {\rm i}z_2)^2 + \gamma z_2\quad\text{or}\quad (z_1-{\rm i} z_2)^2 + \gamma\\
				(2b):	z_2 (z_1 - \alpha z_2) + \gamma & \text{for}\quad \gamma=0 \,\,\,\text{or}\,\,\, \alpha_{\mathrm{im}}\gamma_{\mathrm{im}} < 0,\\
				(2c.1): z_1^2+z_2^2+\gamma & \text{for}\quad \gamma_{\mathrm{im}}=0 \,\,\,\text{and}\,\,\, \gamma_{\mathrm{re}}\le0.
			\end{cases}
		\end{equation}
		
		An example for a complex conic of class (2c.2) where the imaginary projection is $\R^2$ is $p = z_1^2 - 3{\rm i}z_1z_2-2z_2^2$. The reason is that for any given $(y_1,y_2)\in\R^2$, the polynomial $p$ vanishes on the point $(-y_2+{\rm i} y_1 , y_1+{\rm i} y_2)$. Answering the following question completes the verification of complex conics with a full-space imaginary projection.
		
		\begin{question}
			Let $p\in\C[z_1,z_2]$ be a complex conic of the form $p=(z_1 - {\rm i}z_2) (z_1 - \alpha z_2) + \gamma$ such that $\alpha\notin\R$ and $\alpha\neq\pm {\rm i}$. Under
			which conditions on the coefficients $\gamma,\alpha\in\C$ does $\mathcal{I}(p)$
			coincide with $\R^2$?
		\end{question}

		\section{convexity results}\label{se:convex}

		For the case of complex plane conics, we have shown in Theorem \ref{OnebddComp} that there can be at most one bounded component in the complement of its imaginary projection. An example of such a conic is $z_1^2+z_2^2+1 = 0$, where the unique bounded component is the unit disc, which in particular is strictly convex. In the following theorem, we show that for any $k>0$, there exists a complex plane curve whose complement of the imaginary projection has exactly $k$ strictly convex bounded components.
		For the case of real coefficients, only the lower bound of $k$ and no
		exactness result is known 
		(see \cite[Theorem 1.3]{joergens-theobald-hyperbolicity}).

		Allowing non-real coefficients lets us break the symmetry of the imaginary projection with respect to the origin and this enables us to fix the number of components exactly instead of giving a lower bound. 	Furthermore, using a non-real conic which has four strictly convex unbounded components, illustrated in Figure \ref{fig:dist-real}, notably drops the degree of the corresponding polynomial.

		\begin{theorem}\label{th:StrictlyConvexComplex}
			For any $k>0$ there exists a polynomial $p\in\C[z_1,z_2]$ of degree $2\lceil \frac{k}{4}\rceil+2$ such that $\mathcal{I}(p)^\compl$ consists of exactly $k$ strictly convex bounded components.
		\end{theorem}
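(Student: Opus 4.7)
The plan is to construct $p$ as a product of $m+1$ complex conics, where $m := \lceil k/4 \rceil$, so that $\deg p = 2(m+1) = 2\lceil k/4 \rceil + 2$. One factor will be a ``disc'' conic whose imaginary projection complement is a large strictly convex bounded open disc, and the remaining $m$ factors will be ``wedge'' conics obtained from the conic in Example~\ref{ex:1b} by the action of $G_2$, each contributing four strictly convex unbounded components to its imaginary projection complement. Because finite intersections of strictly convex sets are strictly convex, and intersection with a bounded set produces bounded sets, the resulting components of $\I(p)^\compl$ will automatically be strictly convex and bounded.

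Concretely, take $p_0(\z) = z_1^2 + z_2^2 + R^2$ for $R$ large, so that $D := \I(p_0)^\compl$ is the open disc of radius $R$ centered at the origin by case~(2c.1). Starting from $q(\z) = z_1 z_2 + 2{\rm i}$ as in Example~\ref{ex:1b}, define $q_i(\z) := q(A_i(\z - {\rm i}\aaa_i))$ for chosen $A_i \in \GL_2(\R)$ and $\aaa_i \in \R^2$; by Lemma~\ref{le:group-actions-improj}, each $q_i$ is a complex conic of degree two, and $\I(q_i)^\compl = A_i^{-1}\I(q)^\compl + \aaa_i$ is a rotated, scaled, translated copy of $\I(q)^\compl$, still consisting of four strictly convex unbounded wedges, now with centers $\aaa_i$ and orientations of my choosing. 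Setting $p = p_0 \cdot q_1 \cdots q_m$ and applying $\I(p_1 p_2)^\compl = \I(p_1)^\compl \cap \I(p_2)^\compl$ iteratively yields
\[
\I(p)^\compl \;=\; \bigcup_{(j_1, \ldots, j_m) \in \{1,2,3,4\}^m} \Bigl( D \cap W_{1,j_1} \cap \cdots \cap W_{m,j_m} \Bigr),
\]
where $W_{i,j}$ is the $j$-th wedge of $\I(q_i)^\compl$. Each non-empty cell in this decomposition is a finite intersection of strictly convex sets contained in $D$, hence strictly convex and bounded, so the only task left is to arrange the parameters $R$, $A_i$, $\aaa_i$ so that exactly $k$ such cells are non-empty.

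I proceed by induction on $m$. For $m = 1$ and $k \in \{1,2,3,4\}$, I choose $\aaa_1$ so that precisely $k$ of the four wedges of $q_1$ intersect $D$: all four when $\aaa_1$ is placed sufficiently interior to $D$, and only $k < 4$ when $\aaa_1$ is shifted toward $\partial D$ so that the remaining $4-k$ wedges point entirely outside $D$. For the inductive step with $r := k - 4(m-1) \in \{1,2,3,4\}$, I assume $p_0 \cdot q_1 \cdots q_{m-1}$ realizes $4(m-1)$ cells and insert $q_m$ with its wedge center $\aaa_m$ interior to (or on the shared boundary of) a cell $C$ of the current configuration, with $A_m$ scaled so that the ``plus''-shaped wedge pattern of $q_m$ is localized inside $C$. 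For $r \in \{1,2,3\}$ I split the single cell $C$ into $r+1 \le 4$ strictly convex subcells by orienting $A_m$ so that exactly $r+1$ of the four wedges of $q_m$ reach into $C$, and for $r = 4$ I split one cell into $4$ pieces and a neighbour into $2$ by placing $\aaa_m$ near their common boundary.

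The main obstacle is the geometric verification of this splitting step, namely that the three-parameter family $(A_m, \aaa_m)$ really provides enough freedom to produce every gain $r \in \{1,2,3,4\}$ while leaving the remaining cells unaffected. Once this is established, strict convexity and boundedness of all resulting cells are immediate from the product decomposition above, and the exact count $k$ is achieved with the prescribed degree $2\lceil k/4 \rceil + 2$.
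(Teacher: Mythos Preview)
Your construction uses the same building blocks as the paper: $m=\lceil k/4\rceil$ copies of the conic $z_1z_2+2{\rm i}$ from Example~\ref{ex:1b}, each contributing four strictly convex unbounded complement components, multiplied by one ``disc'' conic. The degree count, boundedness, and strict convexity of the resulting cells follow for identical reasons. Where you diverge is in the mechanism for hitting \emph{exactly}~$k$.

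The paper does not argue by induction. It takes the $m$ copies to be pure rotations $g\circ R^{\pi j/2m}$ of $g(z_1,z_2)=z_1z_2+2{\rm i}$ about the origin, so that by symmetry the $4m$ unbounded complement components of the product $\prod_{j=0}^{m-1}(g\circ R^{\pi j/2m})$ all lie at the \emph{same} minimal distance $r=|OA_1|$ from the origin. The circle of radius $r$ centred at $0$ then passes through all $4m$ nearest points $A_1,\dots,A_{4m}$, and a single small perturbation of its centre and radius yields a circle $C'$ meeting the interiors of exactly $k$ of the components and missing the remaining $4m-k$. Multiplying by the translated disc conic $(z_1-{\rm i}a)^2+(z_2-{\rm i}b)^2+s^2$ finishes the argument in one stroke. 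The symmetric set-up reduces the entire selection problem to perturbing one circle off a tangency configuration, which is straightforward.

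Your inductive step, by contrast, requires each new factor $q_m$ to affect only the targeted cell(s). You rightly flag this as ``the main obstacle'', and it is a genuine one: the set $\I(q_m)$ is an \emph{unbounded} cross-shaped strip between two hyperbolas, so no scaling ``localizes'' it inside a bounded cell $C$. To make the step work you would have to choose the asymptotic directions of $q_m$ through $\aaa_m$ so that the two asymptote lines avoid every other (bounded) cell of the current configuration, and then argue that for sufficiently small scale the thin hyperbolic strip does too. This is plausible but not carried out; as written the proof is a sketch. There is also a second, unflagged issue: by case~(1b) with $\gamma\notin\R$, each $\I(q_i)^\compl$ contains the centre $\aaa_i$ as an \emph{isolated point} in addition to the four wedges $W_{i,j}$, so your cell decomposition may pick up spurious zero-dimensional components at the $\aaa_i$ that must be eliminated from the count.
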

		
		\begin{proof}
			
			Let $R^{\varphi}$ be the rotation map and $g:\C^2\rightarrow\C^2$ be defined as						
			\[
			g(z_1,z_2) = z_1z_2+2{\rm i}.			
			\]
			Note that the equation
			\begin{equation}\label{eq:m=2}
			\prod_{j=0}^{m-1}(g\circ R^{\pi j/2m})(z_1,z_2) =0 
			\end{equation}				
			where $m=\lceil \frac{k}{4}\rceil$ as before, has $4m$ unbounded components in the complement of its imaginary projection. We need to find a circle that intersects with $k$ of them and does not intersect with the rest $4m-k$ components. By symmetry of the construction of the equation above, the smallest distance between the origin $O$ and each component is the same for all the components. The following picture shows the case $m=2$.  
			\begin{figure}[!htbp]
				\includegraphics[width=6cm]{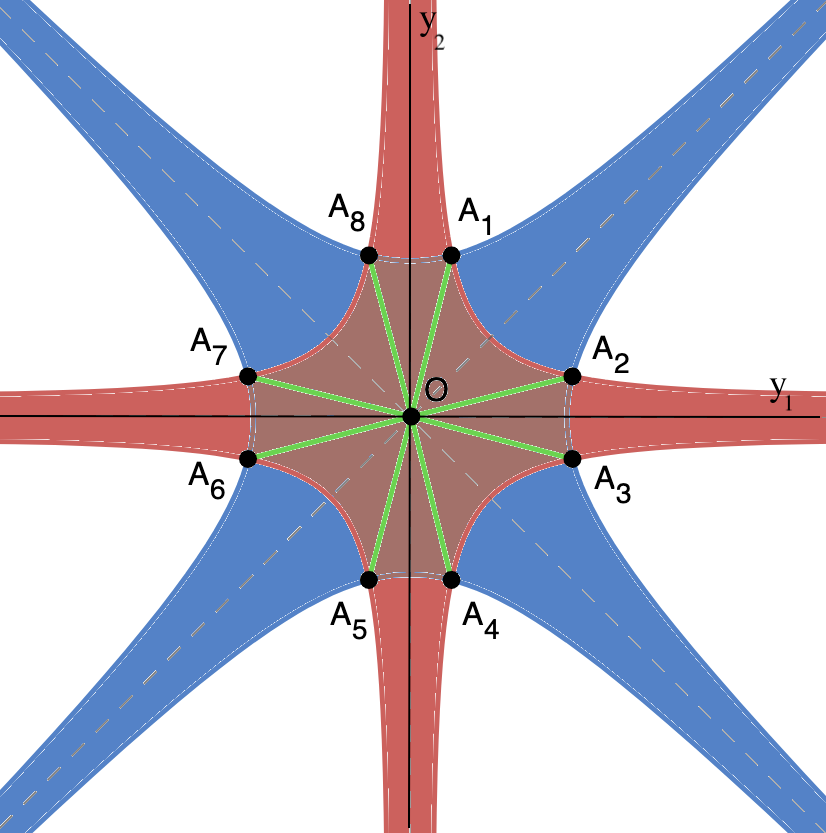}
				\caption{The imaginary projection of (\ref{eq:m=2}) for $m=2$ is the union of the imaginary projections for polynomials corresponding to $j=0$ and $j=1$.}
			\end{figure}	
			
			Let $C$  be the boundary of the imaginary projection of 
			$
			z_1^2 +z_2^2 +r^2
			$
			where $r = |OA_1|$. The center of $C$ is the origin and it passes through all $4m$ points $A_1,\dots,A_{4m}$ that minimize the distance from the origin to each component. A sufficiently small perturbation of the center and the diameter can result in a circle $C'$ with center $(a,b)$ and radius $s$ that only intersects the interiors of the first $k$ unbounded components.
			Now define \[q:=(z_1-{\rm i}a)^2+(z_2-{\rm i}b)^2+s^2.\]
			By Lemma \ref{le:group-actions-improj} and the fact that the imaginary projection of the multiplication of two polynomials is the union of their imaginary projections,
			the polynomial 
			\[
			p := q \cdot \prod_{j=0}^{m-1}(g\circ R^{\pi j/2m})(z_1,z_2),	
			\]		
			has exactly $k$ strictly convex bounded components in $\mathcal{I}(p)^\compl$.
		\end{proof}

		Although, by generalizing from real to complex coefficients, we improved the degree of the desired polynomial from $d=4\lceil \frac{k}{4}\rceil+2$ to $d/2+1$,  it is not the optimal degree.
		For instance if $k=1$, the polynomial $z_1^2+z_2^2+1$ has the desired imaginary projection, while the degree is $2<4$. Thus, we can ask the following question.
		
		\begin{question}\label{ques:deg}
			For $k>0$, what is the smallest integer $d>0$ for which there exists a polynomial $p\in\C[z_1,z_2]$ of degree $d$ such that $\mathcal{I}(p)^\compl$ consists of exactly $k$ strictly convex bounded components.
		\end{question}
		
		\vspace{-5mm}
		
		
		\section{Conclusion and open questions\label{se:outlook}}
		
		We have classified the imaginary projections of complex conics and revealed
		some phenomena for polynomials with complex coefficients in higher degrees and dimensions.
		It seems widely open to come up with a classification of the imaginary projections
		of bivariate cubic polynomials, even in the case of real coefficients. In particular, the maximum number of components in the complement of the imaginary projection for both complex and real polynomials of degree $d$ where $d\ge 3$ is currently unknown. We have shown that in degree two they coincide for real and complex conics, however, this may not be the case for cubic polynomials. 
		
\subsection*{Acknowledgment.} We thank the anonymous referees for
their helpful comments.
		
		\bibliography{1-Nov-22-for-arxiv}
		\bibliographystyle{plain}

	\end{document}